\newcommand{\CC}{{\mathbb{C}}}
\newcommand{\ZZ}{{\mathbb{Z}}}
\newcommand\bs{{\mathbf s}}
\newcommand\bt{{\mathbf t}}
\newcommand\bu{{\mathbf u}}
\newcommand\bv{{\mathbf v}}
\newcommand\bw{{\mathbf w}}
\newcommand\bx{{\mathbf x}}
\newcommand\bz{{\mathbf z}}
\newcommand\bW{{\mathbf W}}
\newcommand\bS{{\mathbf S}}
\newcommand{\QQ}{{\mathbb Q}}
\newcommand{\cH}{{\mathcal{H}}}
\newcommand\bpi{{\boldsymbol\pi}}
\newcommand{\Irr}{{\operatorname{Irr}}}
\newcommand{\GL}{{\operatorname{GL}}}
\newcommand{\Frac}{{\operatorname{Frac}}}
\newcommand{\reg}{{\operatorname{reg}}}
\let\vhi=\varphi
\def\lexp#1#2{\kern\scriptspace\vphantom{#2}^{#1}\kern-\scriptspace#2}
\newtheorem{lem}[equation]{Lemma}
\newtheorem{conj}[equation]{Conjecture}
\newtheorem{prop}[equation]{Proposition}
\numberwithin{equation}{section}
\theoremstyle{definition}
\newtheorem{defn}[equation]{Definition}
\theoremstyle{remark}
\newtheorem{rem}[equation]{Remark}
\begin{document}

\title{Constructing representations of Hecke algebras for complex
       reflection groups}

\date{September 18, 2009}

\author{Gunter Malle}
\address{FB Mathematik, TU Kaiserslautern,
Postfach 3049, 67653 Kaisers\-lautern, Germany.}
\makeatletter
\email{malle@mathematik.uni-kl.de}
\makeatother

\author{Jean Michel}
\address{CNRS, Universit\'e Paris VII, 175, rue du Chevaleret,
75013 Paris, France.}
\makeatletter
\email{jmichel@math.jussieu.fr}
\makeatother

\thanks{The authors thank the Mathematical Sciences Research Institute,
Berkeley, and the Isaac Newton Institute for Mathematical Sciences, Cambridge,
for their hospitality during the preparation of parts of this work}

\begin{abstract}
We investigate the representations and the structure of Hecke algebras
associated to certain finite complex reflection groups. We first describe
computational methods for the construction of irreducible representations of
these algebras, including a generalization of the concept of $W$-graph to
the situation of complex reflection groups. We then use these techniques to
find models for all irreducible representations in the case of complex
reflection groups of dimension at most three. Using these models we are able
to verify some important conjectures on the structure of Hecke algebras.
\end{abstract}

\maketitle

\pagestyle{myheadings}
\markboth{for personal use only}{Constructing representations}
%%\markboth{}{}

%%%%%%%%%%%%%%%%%%%%%%%%%%%%%%%%%%%%%%%%%%%%%%%%%%%%%%%%%%%%%%%%%%%%%%%%%
\section{Introduction} \label{sec:intro}

Let $W\le\GL(V)$ be a finite irreducible group on a complex vector space $V$
generated by complex reflections, that is, $W$ is a finite complex reflection
group. Let $R\subset W$ denote the set of reflections in $W$. For any
reflection $s\in R$ let $H_s\subset V$ denote its hyperplane of fixed points
on $V$. Then $V^\reg:=V\setminus\cup_{s\in R} H_s$ is connected in the complex
topology, and $W$ acts freely (and continuously) on $V^\reg$ by the theorem of
Steinberg. The {\em braid group} associated to $(W,V)$ is the fundamental group
$$B(W):=\pi_1(\bar V,\bar x_0)$$
of the quotient $\bar V:= V^\reg/W$ with respect to some base point
$\bar x_0\in\bar V$. \par
Let $H$ be the reflecting hyperplane of some reflection of $W$. Then its
stabilizer $W_H$ is cyclic, consisting solely of reflections (and the
identity). The \emph{distinguished reflection} $s_H\in W_H$ of $W_H$ is by
definition
the reflection whose non-trivial eigenvalue on $V$ equals $\exp(2\pi i/d)$,
where $d:=|W_H|$. Via the natural projection map from $B(W)$ onto $W$
induced by the quotient map $V^\reg\rightarrow\bar V$, the distinguished
reflection $s_H$ can be lifted to so-called \emph{braid reflections}
$\bs$ in $B(W)$.
For each reflection $s_H$ choose $d$ indeterminates $u_{s,0},\ldots,u_{s,d-1}$
such that $u_{s,j}=u_{t,j}$ if $s,t$ are conjugate in $W$. We write
$\bu$ for the collection of these indeterminates, and let
$A:=\ZZ[\bu,\bu^{-1}]$. The {\em generic cyclotomic Hecke algebra} associated
to $W$ with parameters $\bu$ is the quotient
$$\cH(W,\bu):=A\,B(W)/I$$
of the group algebra $A\,B(W)$ of the braid group $B(W)$ by the ideal $I$
generated by the
$\prod_{i=0}^{d-1}(\bs-u_{s,i})$, where $s$ runs over the distinguished
reflections and $\bs$ over the associated braid reflections. \par
An important and well-studied special case occurs if $W$ is actually a real
reflection group, that is a Coxeter group, in which case the cyclotomic
Hecke algebra becomes the well-known Iwahori-Hecke algebra of $W$. In this
situation, all of the questions mentioned below have been settled quite
a while ago, so here we will be concerned exclusively with the non-real
groups. \par
Bessis \cite[0.1(e)]{BZ} has shown that $B(W)$ has a presentation of the form
\begin{equation}\label{presentation}
  \langle \bs_1,\ldots,\bs_n\mid
  p_j(\bs_1,\ldots,\bs_n)=q_j(\bs_1,\ldots,\bs_n)\rangle
\end{equation}
where  $\bs_i$  are braid reflections whose images in $W$ form a minimal
system of reflections needed to generate $W$ (thus if $W$  is irreducible
we have $n=\dim V$ or $n=\dim  V+1$) and where $(p_j,q_j)$ run over a finite
set of pairs positive words of equal length in the $\bs_i$. One obtains a
presentation of $W$ by adding the relations
$\bs_i^{d_{s_i}}=1$  where $d_{s_i}$ is the order of the reflection $s_i\in W$
image of $\bs_i$ (cf. \cite[0.1(f)]{BZ}).

A consequence is that the cyclotomic Hecke algebra specializes to the group
algebra of $W$ under the map $u_{s,j}\mapsto\exp(2\pi ij/d_s)$.

Explicit presentations of the form~(\ref{presentation}) of $B(W)$ and hence
of $\cH(W,\bu)$ are known for all irreducible reflection groups, see
Brou\'e--Malle--Rouquier \cite{BMR} and the references given there,
Bessis--Michel \cite{BMi} and Bessis \cite[Th.~0.6]{Be}.
\par
The properties of cyclotomic Hecke algebras have been studied due to their
(conjectured) role in the representation theory of finite reductive groups.
Nevertheless, several important questions remain open at present, or have been
settled only for some of the irreducible reflection groups. We recall them in
Section~\ref{sec:conj}.
\par
Apart   from  these  structural  problems,  there  are  questions  of  a  more
computational  nature which need  to be settled.  We would like  to be able to
write  down  an  explicit  $A$-basis  of  $\cH(W,\bu)$,  with  known structure
constants.  Furthermore,  we  would  like  to  know  explicit  models  for all
irreducible  representations of $\cH(W,\bu)$. Again,  these two questions have
been  solved  for  the  imprimitive  reflection  groups (\cite{AK,MM}). In the
present  paper,  we  solve  these  computational  problems  for  the primitive
irreducible  reflection groups of  dimension at most~3,  which only leaves the
five groups $G_{29},G_{31},G_{32},G_{33}$ and $G_{34}$ (in Shephard and Todd's
notation for the irreducible reflection groups) to be considered.
\par
It easy to see that the reflection representation $V$ of $W$ can be realized
over the field $K_W$ generated by the traces of the elements of $W$ on $V$.
It is a theorem of Benard and Bessis that {\em all} representations of $W$
can be realized over $K_W$.
\par
It has been shown in \cite{MaR} that assuming Conjecture~\ref{conj}(a) below,
the characters of $\cH(W,\bu)$ take their values (on any basis of $\cH(W,\bu)$
consisting of images of a subset of $B(W)$) in the field
$K_W(\bu_{s,i}^{1/e})_{s,i}$,  where $e$ is the order of the group of roots of
unity  in $K_W$. A consequence of our results here is that, whenever we can
compute them, the representations of $\cH(W,\bu)$ have a model where the
matrices for the generators $\bs_i$ have entries in the field generated by the
corresponding character values.

%%%%%%%%%%%%%%%%%%%%%%%%%%%%%%%%%%%%%%%%%%%%%%%%%%%%%%%%%%%%%%%%%%%%%%%%%
\section{Some conjectures} \label{sec:conj}

We start by recalling some basic conjectures on the structure and representation
theory of cyclotomic Hecke algebras. The most basic conjecture states:

\begin{conj}   \label{conj0}
 Let $W$ be an irreducible complex reflection group, $K=\Frac(A)$. Then:
 \begin{itemize}
  \item[(a)] $\cH(W,\bu)\otimes_A K$ has dimension $|W|$.
  \item[(b)] There exist irreducible non-isomorphic representations $\rho_i$
   of $\cH(W,\bu)$ over $A$ such that $\sum_i\dim(\rho_i)^2=|W|$.
 \end{itemize}
\end{conj}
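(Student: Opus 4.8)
The plan is to establish both parts simultaneously by realizing $\cH(W,\bu)$ as a flat deformation of the group algebra $\CC[W]$, exploiting the specialization $u_{s,j}\mapsto\exp(2\pi ij/d_s)$ recalled above. For the upper bound in~(a), I would first fix the Bessis presentation~(\ref{presentation}) of $B(W)$ and choose a set $S\subset B(W)$ of positive lifts of the elements of $W$, one for each $w\in W$, with $|S|=|W|$. Using the braid relations $p_j=q_j$ together with the Hecke relations $\prod_{i=0}^{d-1}(\bs-u_{s,i})=0$ — which express the top power $\bs^{d_s}$ as an $A$-linear combination of lower powers — one reduces any positive word, and hence (the generators being units in $\cH$) any element of $\cH(W,\bu)$, to an $A$-combination of the images of $S$. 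This shows $\cH(W,\bu)$ is generated by at most $|W|$ elements as an $A$-module, whence $\dim_K(\cH(W,\bu)\otimes_A K)\le|W|$; the same spanning set, specialized, recovers $\CC[W]$, so the bound is consistent but must be upgraded by producing enough representations.

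For the lower bound and for part~(b), I would construct, for each $\chi\in\Irr(W)$, a representation $\rho_\chi$ of $\cH(W,\bu)$ of dimension $\chi(1)$ defined over $A$ which specializes to the corresponding irreducible representation of $W$. Concretely, one seeks matrices $\rho_\chi(\bs_i)\in\GL_{\chi(1)}(A)$ whose eigenvalues lie among the $u_{s_i,j}$ (with multiplicities matching those of the roots of unity $\exp(2\pi ij/d_{s_i})$ in the $W$-representation, so that the Hecke relation holds) and which satisfy the braid relations $p_j=q_j$ of~(\ref{presentation}). The generalized $W$-graph formalism, and the reduction of the problem to solving the braid relations with prescribed spectra, is precisely the computational mechanism to be carried out for each of the primitive groups in question.

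Once these $\rho_\chi$ are available, non-isomorphism over $K$ is immediate, since distinct $\rho_\chi$ reduce to non-isomorphic representations of $W$ under the specialization; irreducibility at the generic point follows from the irreducibility of the specialization together with upper-semicontinuity of the dimension of the commuting algebra. As dimensions are preserved under specialization, $\sum_\chi\dim(\rho_\chi)^2=\sum_{\chi\in\Irr(W)}\chi(1)^2=|W|$, which is~(b). Passing to a splitting field $L\supseteq K$, the Jacobson density theorem then yields a surjection $\cH(W,\bu)\otimes_A L\to\prod_\chi M_{\chi(1)}(L)$ of image dimension $\sum_\chi\chi(1)^2=|W|$, so $\dim_K(\cH(W,\bu)\otimes_A K)=\dim_L(\cH(W,\bu)\otimes_A L)\ge|W|$; combined with the upper bound this proves~(a).

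The main obstacle is plainly the explicit construction of the deformations $\rho_\chi$: solving the braid relations over $A$ under the prescribed eigenvalue constraints is a genuinely non-formal problem, with no uniform closed-form recipe, and it is here that the bulk of the computational work — and the generalization of the $W$-graph technique — is required. The spanning argument for the upper bound, while conceptually routine, likewise depends on having a workable normal form adapted to the particular presentation~(\ref{presentation}), which is not entirely automatic for the primitive groups of rank three.
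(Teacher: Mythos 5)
This statement is not a theorem in the paper but a conjecture (Conjecture~\ref{conj0}); the paper offers no proof of it, and explicitly records that part~(a) is known only for the infinite series (Ariki, Brou\'e--Malle) and the $2$-dimensional primitive groups (Etingof--Rains), and that its own methods ``can prove (b) in some cases, which shows that the dimension is at least that big, but we obtain no information on an upper bound.'' Your proposal, read against this, is not a proof but a restatement of the two open problems it would need to solve. The first gap is your upper-bound argument: the claim that the braid relations and the Hecke relations let one rewrite every element of $\cH(W,\bu)$ as an $A$-combination of $|W|$ lifted elements is precisely the hard content of Conjecture~\ref{conj}(a) (the freeness conjecture). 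For Coxeter groups this reduction works because Matsumoto's lemma and the exchange condition give a normal form compatible with the quadratic relations; for the presentations~(\ref{presentation}) of the primitive complex braid groups no such normal-form theory exists, and there is no a priori reason the rewriting process terminates inside the $A$-span of your chosen section $S$. Calling this step ``conceptually routine'' inverts the actual difficulty: it is exactly where the conjecture is open.

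The second gap is part~(b): you posit, for every $\chi\in\Irr(W)$, a deformation $\rho_\chi$ over $A$ with prescribed spectra satisfying the braid relations, but give no construction. Producing these models is the entire computational content of the paper (Hensel lifting with Pad\'e approximation, and the generalized $W$-graph formalism), and it succeeds only for the primitive groups of dimension at most three, group by group, with no uniform argument -- indeed $G_{29}$, $G_{31}$, $G_{32}$, $G_{33}$, $G_{34}$ remain untreated. Moreover, even where models exist they are generally \emph{not} over $A$: the $2$-dimensional model for $G_7$ requires $r=\sqrt{x_1x_2y_1y_2z_1z_2}$, an irrationality shown to be unavoidable, and in general the representations live over $K_W(u_{s,i}^{1/e})$; so the ``over $A$'' requirement in~(b) is itself delicate and cannot simply be assumed of the hypothetical $\rho_\chi$. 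Your deduction of generic irreducibility and non-isomorphism from the specialization, and the density argument giving $\dim\ge|W|$ once the $\rho_\chi$ exist, are fine as conditional steps -- this is essentially how the paper argues that (b) bounds the dimension from below (cf.\ Lemma~\ref{lem:base}) -- but conditional on inputs that are exactly the unresolved conjectures.
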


Part~(a) is known to hold for the infinite series by work of Ariki \cite{Ar} and
Brou\'e--Malle \cite{BroMa}, and
for the 2-dimensional primitive groups by Etingof--Rains \cite{ER}.
Our methods can prove (b) in some cases, which shows that the dimension is at
least that big, but we obtain no information on an upper bound. 
But, assuming these weak statements, we will derive the validity of an 
important stronger assertion.
For this, let now $W$ be irreducible. Then it is known by \cite[remark 12.4]{Be}
that, excepted possibly for the case of $G_{31}$,
the center of $B(W)$ is cyclic, generated by some element $\bz$. We set
$\bpi=\bz^{|ZW|}$ (an element of the pure braid group $\pi_1(V^\reg,x_0)$).

\begin{conj}   \label{conj}
 Let $W$ be an irreducible complex reflection group. Then:
 \begin{itemize}
  \item[(a)] $\cH(W,\bu)$ is free over $A$ of rank $|W|$,
  \item[(b)] $\cH(W,\bu)$ carries a non-degenerate symmetrizing form
   $t:\cH(W,\bu)\rightarrow A$ which makes it into a symmetric algebra, and
   such that
   \begin{equation}\label{tracecond}
     t(T_{b^{-1}})^\vee=t(T_{b\bpi})/t(T_\bpi)\qquad\text{for all }b\in B(W),
   \end{equation}
  where we denote by $b\mapsto T_b$ the natural map from $B(W)\to\cH(W,\bu)$
  and $x\mapsto x^\vee$ is the automorphism of $A$ given by $\bu\mapsto\bu^{-1}$.
 \end{itemize}
\end{conj}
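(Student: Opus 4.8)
The plan is to assume the weak statements of Conjecture~\ref{conj0} and from them deduce Conjecture~\ref{conj}, treating (a) and (b) in turn. For part~(a) I would argue as follows. The presentation~(\ref{presentation}), together with the defining relation $\prod_{i=0}^{d-1}(\bs-u_{s,i})=0$ which expresses the top power of each braid reflection as an $A$-linear combination of lower ones, shows that $\cH(W,\bu)$ is spanned as an $A$-module by the images $T_{b_w}$ of a chosen family of lifts $b_w\in B(W)$ of the elements $w\in W$; in particular it is generated by $|W|$ elements. Conjecture~\ref{conj0}(a) asserts that $\cH(W,\bu)\otimes_A K$ has dimension $|W|$, so these $|W|$ generators span a $K$-vector space of dimension $|W|$ and are therefore $K$-linearly independent, hence $A$-linearly independent. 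A linearly independent generating set is a basis, so $\cH(W,\bu)$ is free over $A$ of rank $|W|$.

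For part~(b) I would first use Conjecture~\ref{conj0}(b) to pin down the structure of $\cH(W,\bu)\otimes_A K$. Writing $n_i=\dim\rho_i$ and $\Phi=\bigoplus_i\rho_i:\cH(W,\bu)\to\bigoplus_i M_{n_i}(A)$, the $\rho_i\otimes_A K$ are absolutely irreducible and pairwise non-isomorphic, so $\Phi\otimes_A K$ is surjective onto $\bigoplus_i M_{n_i}(K)$ by the Jacobson density theorem. Because $\sum_i n_i^2=|W|$ equals $\dim_K\cH(W,\bu)\otimes_A K$ by Conjecture~\ref{conj0}(a), this surjection is an isomorphism, and $\cH(W,\bu)\otimes_A K$ is split semisimple. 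On such an algebra every symmetrizing form has the shape $t=\sum_i s_i^{-1}\chi_i$ with $\chi_i=\operatorname{tr}\circ\rho_i$ and non-zero Schur elements $s_i\in K^\times$, and any such $t$ is automatically symmetric and non-degenerate over $K$; I would take this as the candidate form, normalising the $s_i$ by the requirement $t(T_1)=1$.

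The main obstacle, as I see it, is to descend this form from $K$ to $A$: one must show that $t(T_{b_w})\in A$ for the basis of part~(a) and, more seriously, that the Gram matrix $\bigl(t(T_{b_v}T_{b_w})\bigr)_{v,w}$ is invertible over $A$ rather than merely over $K$, since the units of $A=\ZZ[\bu,\bu^{-1}]$ are only the $\pm$monomials. To control both I would specialise via $u_{s,j}\mapsto\exp(2\pi ij/d_s)$, under which $\cH(W,\bu)$ becomes $\ZZ[W]$ and $t$ must specialise to the canonical symmetrizing form of the group algebra, whose Gram determinant is $\pm1$. Tits' deformation theorem then matches the irreducibles of $\cH(W,\bu)\otimes_A K$ with those of $W$, and the known factorised expressions for the Schur elements should force $t$ to be $A$-valued; the remaining point, that the Gram determinant is a genuine unit and not just non-zero, is exactly where the explicit models computed in this paper enter, by exhibiting a triangular structure of the Gram matrix with monomial diagonal.

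It remains to verify the trace identity~(\ref{tracecond}). Here I would exploit that $\bpi$ is central, so $T_\bpi$ acts on $\rho_i$ by a scalar $\omega_i\in K^\times$ and $t(T_{b\bpi})=\sum_i\omega_i s_i^{-1}\chi_i(T_b)$, while $t(T_\bpi)=\sum_i\omega_i s_i^{-1}n_i$. On the other hand $b\mapsto b^{-1}$ combined with the automorphism $\vee$ carries each $\rho_i$ to its $\vee$-contragredient, which is again one of the $\rho_j$, and the Schur elements transform accordingly under $\vee$. Matching the two expressions reduces~(\ref{tracecond}) to an identity between the characters $\chi_i$, their $\vee$-duals, the scalars $\omega_i$ and the Schur elements $s_i$, which is a formal consequence of the definition of the contragredient representation and of the central character. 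I expect this concluding step to be routine once the integrality and unit-Gram-determinant issues of the previous paragraph, which are the genuine difficulty, have been settled.
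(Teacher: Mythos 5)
The statement you are addressing is a \emph{conjecture}; the paper never proves it in general, and your proposal does not either, because its two load-bearing steps are unjustified. First, for part~(a) you assert that the presentation~(\ref{presentation}) together with the relations $\prod_{i=0}^{d-1}(\bs-u_{s,i})=0$ shows that $\cH(W,\bu)$ is spanned over $A$ by $|W|$ lifts of the elements of $W$. That inference is invalid: a presentation of $B(W)$ by braid relations plus polynomial relations on the generators gives no normal form and no bound whatsoever on $A$-module generation of the quotient algebra. This spanning statement is precisely the hard content of part~(a) (the Brou\'e--Malle--Rouquier freeness conjecture), while Conjecture~\ref{conj0}(a) controls only the $K$-dimension and says nothing about torsion or finite generation over $A$. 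Granting the spanning, your conclusion (independence from the dimension count, hence a basis) would be fine --- but you have assumed exactly what was to be proved.

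Second, you correctly single out the descent of $t$ from $K$ to $A$ --- integrality of the values and invertibility of the Gram matrix over $A$, whose only units are signed monomials --- as the main obstacle, but you then resolve it by fiat: that the factorised Schur elements ``should force'' $t$ to be $A$-valued, and that the explicit models ``exhibit a triangular structure of the Gram matrix with monomial diagonal.'' Neither claim is argued, and the second is not what happens in the paper. The paper's actual route is different in kind: it fixes $t$ by the Schur-element formula~(\ref{schur}) with the Schur elements of \cite{MaG}, introduces the notion of a \emph{good section} $\bW$ (one with $t(T_\bw)=t(T_{\bw^{-1}\bpi})=0$ for $\bw\ne1$ and Gram matrix in $\GL_{|W|}(A)$), proves Proposition~\ref{good} reducing Conjecture~\ref{conj} to the existence of such a section with $t(T_{\bs\bw\bw'})\in A$, and then verifies these hypotheses group by group by explicit computation with the representation models --- sometimes only after replacing minimal-length lifts by longer ones (as for $G_{11}$, $G_{15}$, $G_{27}$), and sometimes only heuristically at specialized parameters. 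Relatedly, your final step treating~(\ref{tracecond}) as a formal consequence of contragredient duality is too quick: the paper's unnumbered proposition in Section~\ref{sec:conj} shows only that, under Conjecture~\ref{conj2} and extra hypotheses, (\ref{tracecond}) is \emph{equivalent} to the vanishing conditions $t(T_{\bw^{-1}\bpi})=0$ for $\bw\in\bW\setminus\{1\}$; the tables show these conditions genuinely fail for naive sections, so they must be checked case by case, not deduced formally.
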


Once~Conjecture~\ref{conj}(a) has been established, it follows from Tits'
deformation theorem that $\cH(W,\bu)$ is a deformation of the group algebra of
$W$, that is, it becomes isomorphic to the group algebra over a suitable finite
extension of the field of fractions of $A$.  

It was shown in \cite[2.1]{BMM} that assuming~(a), there is at
most one symmetrizing trace on $\cH(W,\bu)$ satisfying~(b) which specializes
to the canonical trace on $\CC W$.

Given  a split semi-simple  symmetric algebra $H$  with a symmetrizing form $t$
such  that $t(1)=1$,  we define  the {\em  Schur element} $S_\chi$ attached to
$\chi\in\Irr(H)$ by the property that
\begin{equation}  \label{schur}
  t(x)=\sum_{\chi\in\Irr(H)}\chi(x)/S_\chi\qquad\text{ for all }x\in H.
\end{equation}

Let us denote by $\bs\mapsto T_\bs$ the  natural map $B(W)\to\cH(W,\bu)$. In
\cite{MaD,MaG}, assuming Conjecture~\ref{conj}(a) (which implies that
$\cH(W,\bu)$ is split semi-simple over a suitable extension of $A$), it was
shown that for all exceptional complex reflection groups there is a unique
symmetrizing trace such that $t(T_x)=0$ for
$x\in E\setminus\{1\}$, where $E$ is a subset of $B(W)$ such that
\begin{itemize}
 \item all character values on $\{T_x\mid x\in E\}$ could be determined.
 \item equations~(\ref{schur}) for $x\in E$ are sufficient in number to have
  a unique solution. For instance, it is enough that the image of $E$
  in $W$ intersects all conjugacy classes.
\end{itemize}
Moreover, the corresponding Schur elements $S_\chi$ were determined in all
cases.
When specializing the Hecke algebra to the group algebra $\CC W$, $t$
specializes to the canonical trace $t_W$ on $\CC W$ given by
$t_W(w)=\delta_{w,1}$, so the Schur elements computed in \cite{MaG}
specialize to $|W|/\chi(1)$. 

We fix this symmetrizing form $t$ described above. All of our computational
verifications will depend on a suitable choice of basis for the cyclotomic
Hecke algebra. 

\begin{lem}   \label{lem:base}
 Assume Conjecture~\ref{conj0}. Let $C\subseteq\cH(W,\bu)$ be of cardinality
 $|W|$ and specializing to $W\subseteq \CC W$ under the specialization of
 $\cH(W,\bu)$ to the group algebra. Then $C$ is a $K$-basis of
 $\cH(W,\bu)\otimes_A K$.
\end{lem}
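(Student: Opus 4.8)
The plan is to show that the specialization map from $\cH(W,\bu)\otimes_A K$ to $\CC W$ takes the set $C$ to the basis $W$ of $\CC W$, and to leverage Conjecture~\ref{conj0}(a), which guarantees that $\dim_K \cH(W,\bu)\otimes_A K = |W| = |C|$. The key observation is that once the dimension equals the cardinality of $C$, it suffices to prove that $C$ is $K$-linearly independent; spanning is then automatic. So the whole argument reduces to establishing linear independence of the $|W|$ elements of $C$ over the field $K=\Frac(A)$.

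First I would make the specialization precise. By the remark following the presentation~(\ref{presentation}), there is a specialization homomorphism $\cH(W,\bu)\to\CC W$ sending $u_{s,j}\mapsto\exp(2\pi i j/d_s)$; concretely this is induced by a ring homomorphism $\theta\colon A\to\CC$ (evaluating the indeterminates at the relevant roots of unity), extended to the algebra. The hypothesis says that $\theta$ carries each element of $C$ to a group element, and that these images are exactly the elements of $W$, each occurring once. I would then argue by contradiction: suppose there is a nontrivial $K$-linear relation $\sum_{c\in C}\lambda_c\, c = 0$ with $\lambda_c\in K$. After clearing denominators I may assume all coefficients lie in $A$ and that they have no common factor in the kernel of $\theta$; more carefully, since $A=\ZZ[\bu,\bu^{-1}]$ is an integral domain and $\theta$ is a ring homomorphism to the field $\CC$, I can rescale the relation so that the coefficients lie in $A$ and at least one of them is \emph{not} sent to $0$ by $\theta$.

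Applying $\theta$ to the relation then yields $\sum_{c\in C}\theta(\lambda_c)\,\theta(c) = 0$ in $\CC W$. Because the $\theta(c)$ run over the distinct basis elements $W$ of $\CC W$ and the scalars $\theta(\lambda_c)$ are not all zero, this is a nontrivial linear dependence among the elements of $W$ in the group algebra $\CC W$ — which is absurd, as $W$ is by definition a $\CC$-basis of $\CC W$. This contradiction establishes the linear independence of $C$ over $K$. Combined with the dimension count $\dim_K\cH(W,\bu)\otimes_A K = |W| = |C|$ from Conjecture~\ref{conj0}(a), we conclude that $C$ is a $K$-basis.

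**The main obstacle** is the bookkeeping around clearing denominators: one must ensure that the coefficients can be normalized so that $\theta$ does not annihilate \emph{all} of them simultaneously, for otherwise applying $\theta$ would yield only the trivial identity $0=0$ and give no information. This is handled by observing that a common scalar in $\Frac(A)$ can always be extracted so that the coefficients lie in $A$ with greatest common divisor a unit modulo $\ker\theta$; since $\theta$ is a nonzero ring homomorphism onto a field, it cannot kill a unit, so at least one rescaled coefficient survives. The remaining steps are then formal.
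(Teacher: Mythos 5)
Your overall plan (linear independence plus the dimension count from Conjecture~\ref{conj0}(a)) is the same as the paper's, but the mechanism you use to prove linear independence — specializing a hypothetical relation — breaks down at exactly the step you identify as ``the main obstacle'', and your fix for it is based on a false inference. The kernel of $\theta\colon A\to\CC$ is a prime ideal of height at least two (there are always at least two indeterminates), so it contains pairs of \emph{coprime} elements: for an order-$2$ reflection, $u_{s,0}-1$ and $u_{s,1}+1$ both lie in $\ker\theta$ but have no common factor. Consequently, ``the coefficients lie in $A$ and their gcd is not killed by $\theta$'' does \emph{not} imply ``some coefficient is not killed by $\theta$'': a relation vector proportional to $\bigl((u_{s,0}-1),\,(u_{s,1}+1),\,0,\dots,0\bigr)$ has gcd $1$, yet every rescaling of it into $A^{|C|}$ lands entirely inside $(\ker\theta)^{|C|}$, and applying $\theta$ then yields only $0=0$. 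This failure cannot be repaired by better bookkeeping in $\Frac(A)$; ruling out such relation vectors requires an actual new input. There is a second, independent gap: the specialization homomorphism is defined on $\cH(W,\bu)$, not on $\cH(W,\bu)\otimes_A K$. Your relation, even after clearing denominators, is an identity in $\cH(W,\bu)\otimes_A K$ only; it merely says that $\sum_c\lambda_c\,c$ is an $A$-torsion element of $\cH(W,\bu)$. Under Conjecture~\ref{conj0} alone (freeness, i.e.\ Conjecture~\ref{conj}(a), is \emph{not} assumed here), torsion can exist, and its annihilator may itself be contained in $\ker\theta$, in which case nothing passes to $\CC W$.

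The paper's proof supplies precisely the missing input by working with scalars instead of elements of the algebra: it forms the Gram matrix $M=\bigl(t(xy)\bigr)_{x,y\in C}$ of the symmetrizing form $t$ given by formula~(\ref{schur}). The entries of $M$ lie in the localization of $A$ at the Schur elements, a ring to which $\theta$ \emph{does} extend (the Schur elements specialize to the nonzero values $|W|/\chi(1)$), so $M$ can legitimately be specialized; its specialization is the matrix $\bigl(t_W(vw)\bigr)_{v,w\in W}$, a permutation matrix, whence $\det(M)\neq0$ and $C$ is $K$-linearly independent. In other words, the paper converts linear independence into the nonvanishing of a determinant of well-behaved scalars, which is robust under specialization — exactly the robustness your coefficient-normalization argument fails to provide. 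If you want to salvage a proof along your lines, you would need some such device (the trace form, or the representations over $A$ from Conjecture~\ref{conj0}(b) combined with a nondegeneracy argument); as written, the proposal does not prove the lemma.
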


\begin{proof}
Indeed, by~(\ref{schur}) the matrix $M:=t(xy)_{x,y\in C}$ has entries in the
localization of $A$ at the collection of the Schur elements. Since the
specialization of Schur elements is non-zero, we may specialize $M$ to obtain
the corresponding matrix $M:=t_W(vw)_{v,w\in W}$ for $W$, which is a
permutation matrix. Thus, $\det(M)$ is non-zero, and hence $C$ is $K$-linear
independent. The claim then follows from Conjecture~\ref{conj0}.
\end{proof}

An obvious way to construct a set $C$ as above is by lifting the elements of
$W$ to $B(W)$. We are looking for lifts which satisfy an additional
property with respect to $t$:

\begin{conj}\label{conj2}
 There exists a section $W\to \bW$, $w\mapsto\bw$, of $W$ in $B(W)$ such that
 $\bW\owns 1$, and such that for any $\bw\in\bW\setminus\{1\}$ we have
 $t(T_\bw)=0$.
\end{conj}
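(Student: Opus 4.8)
The plan is to exploit that the symmetrizing form $t$ is a class function on $B(W)$, so that the required vanishing can be arranged through conjugacy rather than checked element by element. First I would observe that every $T_b$ with $b\in B(W)$ is a unit of $\cH(W,\bu)$: each braid reflection satisfies $\prod_i(T_\bs-u_{s,i})=0$ with the $u_{s,i}$ invertible in $A$, so the constant term $\prod_i u_{s,i}$ is a unit and $T_\bs^{-1}$ is a polynomial in $T_\bs$; hence $T_b^{-1}$ exists for every $b$. Since $t$ is a trace, $t(T_h T_b T_h^{-1})=t(T_b)$, and therefore $t(T_{hbh^{-1}})=t(T_b)$ for all $h,b\in B(W)$. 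Thus $t(T_b)$ depends only on the $B(W)$-conjugacy class of $b$.

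Second, I would reduce the problem to exhibiting a single good lift for each conjugacy class of $W$. Recall that, by \cite{MaD,MaG}, there is a subset $E\subseteq B(W)$ with $t(T_x)=0$ for all $x\in E\setminus\{1\}$ and whose image in $W$ meets every conjugacy class. For each non-trivial class $c$ of $W$ I would pick an element $x_c\in E$ whose image $w_c$ lies in $c$; since $c\ne\{1\}$ we have $w_c\ne1$, hence $x_c\ne1$ and $t(T_{x_c})=0$.

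Third, I would assemble the section. Send $1\in W$ to $1\in B(W)$. For a non-trivial class $c$ and each $w\in c$, choose $g\in W$ with $w=g\,w_c\,g^{-1}$, lift $g$ to some $h\in B(W)$, and put $\bw:=h\,x_c\,h^{-1}$. The image of $\bw$ in $W$ is $g\,w_c\,g^{-1}=w$, so $w\mapsto\bw$ is a genuine section with $\bW\owns1$ and $|\bW|=|W|$; and as $\bw$ is $B(W)$-conjugate to $x_c$, the first step yields $t(T_\bw)=t(T_{x_c})=0$ for every $\bw\in\bW\setminus\{1\}$. By Lemma~\ref{lem:base} this $\bW$ is moreover automatically a $K$-basis of $\cH(W,\bu)\otimes_A K$.

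The real obstacle lies not in this bookkeeping but in the input it consumes: to speak of $t$ at all, and of its vanishing on $E$, one needs Conjecture~\ref{conj}(a) for the group in question, together with the explicit Schur elements and the verified identities~(\ref{schur}) on $E$. For the groups where these ingredients are not yet available (in particular $G_{29},G_{31},G_{32},G_{33},G_{34}$), the reduction cannot be run unconditionally, and I would instead verify $t(T_\bw)=0$ directly on explicitly chosen lifts, computing the character values $\chi(T_\bw)$ from the constructed irreducible representations and evaluating $t(T_\bw)=\sum_{\chi}\chi(T_\bw)/S_\chi$. I expect the genuine difficulty to be concentrated there, namely in producing lifts whose character values are computable and confirming the cancellation, rather than in the structural argument above.
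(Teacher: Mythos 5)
Your first two steps are sound, and they constitute a genuine observation: since $b\mapsto T_b$ is a group homomorphism into the units of $\cH(W,\bu)$ and $t$ is, via~(\ref{schur}), a linear combination of characters of a split semisimple algebra, it is a trace, so $t(T_{hxh^{-1}})=t(T_x)$ for all $h,x\in B(W)$. Consequently Conjecture~\ref{conj2} is equivalent to the statement that every non-trivial conjugacy class of $W$ contains the image of at least one element of $B(W)$ with vanishing trace (and, since $\bpi$ is central, the same conjugation trick is even compatible with condition~(\ref{tracecond1})). This reduces $|W|$ conditions to one condition per class, which is a useful remark.

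The gap is in the input you feed into this reduction. You assert that \cite{MaD,MaG} provide a set $E$ with $t(T_x)=0$ for $x\in E\setminus\{1\}$ \emph{whose image meets every conjugacy class of $W$}. The paper says no such thing: meeting all classes is mentioned only as one sufficient condition (``For instance, it is enough that\dots'') for the equations~(\ref{schur}) on $E$ to determine $t$ uniquely, whereas the actual constraint on $E$ is the first bullet --- it consists of those elements on which the character values \emph{could be determined}. At the time of \cite{MaD,MaG} the irreducible representations of these Hecke algebras were largely unknown (constructing them is the point of the present paper, and for $G_{29},G_{31},G_{32},G_{33},G_{34}$ they are still not all known), so $t$ could only be evaluated on rather special elements, and nothing guarantees these cover all classes. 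Indeed, if your input were available, your argument would settle the conjecture for all exceptional groups; yet the authors of this paper, who also wrote \cite{MaD,MaG}, leave it as a conjecture and verify it in the last section group by group, exactly by the method you relegate to a fallback: evaluating $t$ on explicit minimal-length lifts using the newly constructed representations, with ad hoc longer lifts where all minimal ones fail ($G_{11}$, $G_{15}$, $G_{27}$). That fallback is in fact needed for \emph{every} group treated, not just the large ones --- and for the five large groups it cannot even be run, precisely because evaluating $t$ via~(\ref{schur}) requires all irreducible representations. Finally, your argument says nothing about the infinite series $G(de,e,r)$, for which the conjecture is also posed and $t$ is defined by entirely different means. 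So what you have is a correct and worthwhile reduction of Conjecture~\ref{conj2} to an (open) class-by-class statement, not a proof of it.
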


According to Lemma~\ref{lem:base}, $\{T_\bw\mid w\in\bW\}$ is a $K$-basis of
$\cH(W,\bu)\otimes_A K$. Note, however, that in general it will not necessarily
be an $A$-basis of $\cH(W,\bu)$. Now (\ref{tracecond}) and
Conjecture~\ref{conj2} are related as follows:

\begin{prop}
 Assume Conjecture~\ref{conj2}. If either all irreducible representations of
 $\cH(W,\bu)$ have models over $A$, or else $\{T_\bw\mid w\in\bW\}$ is an 
 $A$-basis of $\cH(W,\bu)$ then property~(\ref{tracecond}) is equivalent to:
 \begin{equation}\label{tracecond1}
  \text{for any $\bw\in\bW-\{1\}$ we have }t(T_{\bw^{-1}\bpi})=0.
 \end{equation}
\end{prop}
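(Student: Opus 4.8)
The plan is to reformulate both (\ref{tracecond}) and (\ref{tracecond1}) as statements about two $A$-linear functionals on $\cH(W,\bu)$, and then to pass information between them using the basis $\{T_{\bw^{-1}}\mid\bw\in\bW\}$. First I would introduce the functionals. Set $R(x):=t(xT_\bpi)/t(T_\bpi)$; this is legitimate because $\bpi$ is central, so $T_\bpi$ is central and invertible in $\cH(W,\bu)$, and $t(T_\bpi)\neq0$ since it specializes to $t_W(1)=1$ under the specialization to $\CC W$. By construction $R(T_b)=t(T_{b\bpi})/t(T_\bpi)$ is the right-hand side of (\ref{tracecond}). For the left-hand side I would check that $b\mapsto t(T_{b^{-1}})^\vee$ descends to a well-defined $A$-linear functional $L$ on $\cH(W,\bu)$. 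The clean way is to produce the $\vee$-semilinear anti-automorphism $\sigma$ determined by $\sigma(T_b)=T_{b^{-1}}$: on $AB(W)$ it is the inverse map twisted by $\bu\mapsto\bu^{-1}$, and it preserves the defining ideal because applying it to $\prod_i(\bs-u_{s,i})$ yields $\prod_i(T_\bs^{-1}-u_{s,i}^{-1})$, which vanishes in $\cH(W,\bu)$ as the reciprocal of the relation satisfied by the invertible element $T_\bs$. Then $L:=(\,\cdot\,)^\vee\circ t\circ\sigma$ is $A$-linear (semilinear composed with $\vee$), with $L(T_b)=t(T_{b^{-1}})^\vee$. In this language (\ref{tracecond}) is exactly the assertion $L=R$ as functionals, since the $T_b$ span $\cH(W,\bu)$.

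Next I would exploit the basis $\{T_{\bw^{-1}}\mid\bw\in\bW\}$. The elements $\bw^{-1}$ again form a section of $W$ (as $w$ runs over $W$, so does $w^{-1}$) specializing to $W\subseteq\CC W$, so by Lemma~\ref{lem:base} this is a $K$-basis of $\cH(W,\bu)\otimes_A K$. On it the two functionals are immediate: using Conjecture~\ref{conj2} together with $t(1)=1$ one gets $L(T_{\bw^{-1}})=t(T_\bw)^\vee=\delta_{\bw,1}$, while by definition $R(T_{\bw^{-1}})=t(T_{\bw^{-1}\bpi})/t(T_\bpi)$ is precisely the quantity controlled by (\ref{tracecond1}). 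This makes the equivalence transparent. For the implication (\ref{tracecond})$\Rightarrow$(\ref{tracecond1}) I evaluate $L=R$ at $T_{\bw^{-1}}$ for $\bw\in\bW\setminus\{1\}$: the left side is $t(T_\bw)^\vee=0$, so $t(T_{\bw^{-1}\bpi})=0$. Conversely, assuming (\ref{tracecond1}) gives $R(T_{\bw^{-1}})=\delta_{\bw,1}$, so $L$ and $R$ agree on every vector of the $K$-basis $\{T_{\bw^{-1}}\}$; being $K$-linear they coincide on all of $\cH(W,\bu)\otimes_A K$, which is (\ref{tracecond}).

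The main obstacle I anticipate is the well-definedness of the left-hand functional $L$, that is, that $t(T_{b^{-1}})^\vee$ depends only on the image $T_b\in\cH(W,\bu)$ and not on the chosen braid $b$; the two alternative hypotheses are exactly what make this safe. If all irreducible representations have models over $A$, one reads $L$ off from characters through (\ref{schur}), each $\chi(T_{b^{-1}})$ depending only on $T_b$; if instead $\{T_\bw\}$ is an $A$-basis, one argues via the dual basis of the symmetrizing form $t$. Either way one recovers the same honest functional produced above by $\sigma$, and once $L$ and $R$ are legitimate functionals on $\cH(W,\bu)\otimes_A K$ the equivalence reduces to linear algebra on the basis $\{T_{\bw^{-1}}\}$. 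I would therefore concentrate the care on this descent step and on confirming $t(T_\bpi)\neq0$, treating the remaining manipulations as routine.
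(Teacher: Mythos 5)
Your proof is correct, and while it shares the paper's overall skeleton, it handles the crucial step by a genuinely different mechanism. Both arguments proceed by viewing (\ref{tracecond}) as an equality of two functionals that are linear in $T_b$, and then testing that equality on the $K$-basis $\{T_{\bw^{-1}}\mid\bw\in\bW\}$, where Conjecture~\ref{conj2} turns it into (\ref{tracecond1}). The difference is how one sees that the left-hand side $b\mapsto t(T_{b^{-1}})^\vee$ is a well-defined linear functional of $T_b$. The paper expands both sides by the Schur-element formula (\ref{schur}) and proves the character identity $\chi(T_{b^{-1}})^\vee=\chi(T_b)$ via the specialization $u_{s,j}\mapsto\exp(2\pi ij/d_s)$; this is precisely where the dichotomy hypothesis (models over $A$, or $\{T_\bw\}$ an $A$-basis) enters, since it guarantees that the character values $\chi(T_b)$ lie in $A$, so that $\vee$ can be applied to them and the deformation argument closes. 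You instead construct the $\vee$-semilinear anti-automorphism $\sigma:T_b\mapsto T_{b^{-1}}$ directly, checking that it kills the defining ideal because $\prod_i\bigl(T_\bs^{-1}-u_{s,i}^{-1}\bigr)$ is a unit multiple of $T_\bs^{-d}\prod_i\bigl(T_\bs-u_{s,i}\bigr)$; this $\sigma$ is essentially the map $a_1$ of \cite[1.26]{BMM}, which the paper invokes only to see that $\{T_{\bw^{-1}}\}$ is again a basis. Once $L=\vee\circ t\circ\sigma$ is an honest $A$-linear functional, no character theory is needed at all. Two remarks. First, your closing paragraph slightly misplaces the role of the proposition's hypotheses: as you yourself note, the $\sigma$-construction makes $L$ well defined unconditionally, so your argument never actually uses the dichotomy hypothesis --- you prove the equivalence from Conjecture~\ref{conj2} alone (on top of the standing assumptions of this section: Conjecture~\ref{conj0}, needed for Lemma~\ref{lem:base}, and the existence of the form $t$), which is a mild strengthening of the statement rather than a gap. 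Second, your use of Lemma~\ref{lem:base} on the set $\{T_{\bw^{-1}}\}$ (which has cardinality $|W|$ and specializes to $W\subseteq\CC W$) is a legitimate substitute for the paper's appeal to $a_1$. What each route buys: the paper's stays inside the character/Schur-element framework that is used later for the actual computations (central characters $\omega_\chi(T_\bpi)$, evaluation of $t$ via (\ref{schur})), while yours is more elementary, avoids the subtle rationality issue for character values, and yields the sharper conclusion.
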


\begin{proof}
Using equation (\ref{schur}) for $x=T_b^{-1}$ and $x=T_{b\bpi}$,
condition~(\ref{tracecond}) reads
$$t(T_\bpi)\sum_{\chi\in\Irr(\cH(W,\bu))} \frac{\chi(T_b^{-1})^\vee}
  {S_\chi^\vee}=
  \sum_{\chi\in\Irr(\cH(W,\bu))} \omega_\chi(T_\bpi)\frac{\chi(T_b)}{S_\chi}$$
where $\omega_\chi$ is the central character of $\chi$. Under the standard
specialization $\vhi:A\rightarrow\CC$, $u_{s,j}\mapsto\exp(2\pi ij/d_s)$, we
obviously have the following compatibility with complex conjugation:
$\vhi(a^\vee)=\overline{\vhi(a)}$ for all $a\in A$. Thus
$$\vhi(\chi(T_b^{-1})^\vee)=\overline{\vhi(\chi(T_b^{-1}))}
  =\overline{\chi(b^{-1})}=\chi(b^{-1})=\vhi(\chi(T_b))$$
whence $\chi(T_b^{-1})^\vee=\chi(T_b)$ (note that by our assumptions all
character values $\chi(T_b)$ lie in $A$). Our first equation then reads 
$$t(T_\bpi)\sum_{\chi\in\Irr(\cH(W,\bu))} \frac{\chi(T_b)}{S_\chi^\vee}=
  \sum_{\chi\in\Irr(\cH(W,\bu))} \omega_\chi(T_\bpi)\frac{\chi(T_b)}{S_\chi},$$
which is a linear condition in $T_b$. Thus 
\begin{itemize}
 \item if it holds for the image of $B(W)$ it holds for any element of
  $\cH(W,\bu)$;
 \item it is sufficient to check it for a basis of $\cH(W,\bu)\otimes_AK$.
\end{itemize}
Note that $\{T_\bw^{-1}\mid\bw\in\bW\}$ is still a basis of $\cH(W,\bu)\otimes_AK$
since it is the image of $\{T_\bw\mid\bw\in\bW\}$ by the anti-automorphism
$a_1$ of \cite[1.26]{BMM}. Writing the condition on  this basis we get
$$t(T_\bw)^\vee=\frac{t(T_{\bw^{-1}\bpi})}{t(T_\bpi)}.$$
This holds trivially for $\bw=1$, and for the others $t(T_\bw)^\vee=0$ whence
the result.
\end{proof}

\begin{defn} We say that a section $\bW$ is {\em good} if 
for any $\bw\in\bW-\{1\}$ we have $t(T_\bw)=t(T_{\bw^{-1}\bpi})=0$,
and the matrix $\{t(T_{\bw\bw'})\}_{\bw,\bw'\in\bW}$ is in $\GL_{|W|}(A)$.
\end{defn}

The notion of good section is the tool which will allow us to prove
conjecture \ref{conj} in quite a few cases, using the next proposition:
\begin{prop}\label{good}
Assume that $\bW$ is a good section and that for a generating set $\bS$ of
$B(W)$ we have for all $\bs\in\bS, \bw,\bw'\in \bW$ that 
$t(T_{\bs\bw\bw'})\in A$. Then $\cH(W,\bu)$ satisfies conjecture \ref{conj},
and $\{T_\bw\mid \bw\in \bW\}$ is an $A$-basis of $\cH(W,\bu)$.
\end{prop}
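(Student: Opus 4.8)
The plan is to establish the two claims of the proposition in the following order: first show that $\{T_\bw\mid\bw\in\bW\}$ is an $A$-basis of $\cH(W,\bu)$, and then deduce Conjecture~\ref{conj}(a) and (b) from the structure that this basis provides, with~(\ref{tracecond}) coming from the earlier proposition. I would begin by exploiting the hypothesis $t(T_{\bs\bw\bw'})\in A$ to prove that the left-multiplication action of each generator $\bs\in\bS$ on the $A$-span $M:=\sum_{\bw\in\bW}A\,T_\bw$ stays inside $M$. Since $\bW$ is a good section, the Gram-type matrix $G:=\{t(T_{\bw\bw'})\}_{\bw,\bw'\in\bW}$ lies in $\GL_{|W|}(A)$, so for any element $x\in\cH(W,\bu)$ the coefficients of $x$ with respect to $\{T_\bw\}$ can be recovered by applying $G^{-1}$ to the vector $(t(T_{x\bw'}))_{\bw'}$. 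Applying this with $x=T_{\bs\bw}$, the entries $t(T_{\bs\bw\bw'})$ lie in $A$ by hypothesis, and $G^{-1}$ has entries in $A$ because $G\in\GL_{|W|}(A)$; hence $T_{\bs\bw}\in M$ for every $\bs\in\bS$ and every $\bw\in\bW$.

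With this in hand, I would run an induction on word length in the generators $\bS$ to show $M$ is a left ideal, i.e. $T_{\bs}M\subseteq M$ for all $\bs\in\bS$. Because $\bS$ generates $B(W)$ and $T_1=1\in M$, the submodule $M$ then contains $T_b$ for every $b\in B(W)$, so $M=\cH(W,\bu)$: the set $\{T_\bw\mid\bw\in\bW\}$ spans $\cH(W,\bu)$ over $A$. Since $\bW$ is a section of the surjection $B(W)\to W$, it has exactly $|W|$ elements, and Lemma~\ref{lem:base} (applicable under Conjecture~\ref{conj0}) already gives that this set is $K$-linearly independent after tensoring with $K=\Frac(A)$; linear independence over $K$ forces $A$-linear independence. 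Therefore $\{T_\bw\mid\bw\in\bW\}$ is a genuine $A$-basis, and in particular $\cH(W,\bu)$ is free over $A$ of rank $|W|$, which is Conjecture~\ref{conj}(a). The invertibility of $G$ over $A$ exactly says the form $t$ is non-degenerate as a form on the free $A$-module, giving the symmetrizing form and hence Conjecture~\ref{conj}(b); the good-section condition $t(T_{\bw^{-1}\bpi})=0$ for $\bw\neq1$ is precisely hypothesis~(\ref{tracecond1}), so the earlier proposition (whose hypothesis that $\{T_\bw\}$ is an $A$-basis we have just verified) yields property~(\ref{tracecond}).

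The main obstacle is the first step: converting the pointwise hypothesis $t(T_{\bs\bw\bw'})\in A$ into the statement that left multiplication by a generator preserves $M$. This is where the invertibility of $G$ over $A$ (not merely over $K$) is essential—it is what allows us to solve for the expansion coefficients of $T_{\bs\bw}$ and conclude they are integral rather than merely lying in $\Frac(A)$. One must be careful that $G^{-1}$ has entries in $A$ (immediate from $\det G\in A^\times$, since $G\in\GL_{|W|}(A)$ means exactly this), and that the reconstruction identity $x=\sum_{\bw}\bigl(\textstyle\sum_{\bw'}(G^{-1})_{\bw\bw'}\,t(T_{x\bw'})\bigr)T_\bw$ holds for all $x$ once we know it holds for a basis—but this follows formally from the non-degeneracy of $t$ together with symmetry of the form $t(ab)=t(ba)$. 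I expect the verification that these manipulations are valid over $A$ rather than $K$ to be the delicate point, whereas the inductive spanning argument and the final bookkeeping to extract (a), (b) and~(\ref{tracecond}) are routine.
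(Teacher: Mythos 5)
Your proposal is correct and follows essentially the same route as the paper's own proof: the paper likewise uses $\{t(T_{\bw\bw'})\}_{\bw,\bw'\in\bW}\in\GL_{|W|}(A)$ to obtain the dual basis $T'_\bw$ inside $A[T_\bw]_{\bw\in\bW}$ (your $G^{-1}$-reconstruction of coefficients is exactly this), deduces from $t(T_{\bs\bw\bw'})\in A$ that $T_{\bs\bw}\in A[T_\bw]_{\bw\in\bW}$, and concludes that $\{T_\bw\}$ is an $A$-basis, whence Conjecture~\ref{conj}. Your write-up merely makes explicit what the paper compresses into ``i.e.'': the induction over words in $\bS$ (where one should also remark that $T_\bs^{-1}$ is an $A$-polynomial in $T_\bs$, since $\prod_iu_{s,i}\in A^\times$, so that inverses of generators are covered), the $K$-linear independence via Lemma~\ref{lem:base}, and the extraction of \ref{conj}(b) and~(\ref{tracecond}) from the good-section conditions and the earlier proposition.
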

\begin{proof}
Let $M$ be the matrix $\{t(T_{\bw\bw'})\}_{\bw,\bw'\in\bW}$. From the
assumption $M\in \GL(A)$ it follows that the dual basis $T'_\bw$ of
$T_\bw$ with respect to $t$ lies in $A[T_\bw]_{\bw\in \bW}$.
It follows that $h\in \cH(W,\bu)$ lies in $A[T_\bw]_{\bw\in \bW}$ if and
only if for any $\bw$ we have $t(h T_\bw)\in A$; indeed
the coefficient of $h$ on $T_\bw$ is $t(hT'_\bw)$ which is in $A$ if all
the $t(h T_\bw)$ are in $A$.

Thus the condition in the statement shows that 
$T_{\bs\bw}\in A[T_\bw]_{\bw\in \bW}$, i.e. that $T_\bw$ is an $A$-basis.
\end{proof}

%%%%%%%%%%%%%%%%%%%%%%%%%%%%%%%%%%%%%%%%%%%%%%%%%%%%%%%%%%%%%%%%%%%%%%%%%
\section{Imprimitive groups} \label{sec:imprimitive}

Before turning to the main subject of the present paper, the exceptional
complex reflection groups, we recall the current situation for the infinite
series, that is, the imprimitive groups and the symmetric groups.
Conjecture~\ref{conj}(a) has been verified in these cases by
Ariki--Koike \cite{AK}, Brou\'e--Malle \cite{BroMa} and Ariki \cite{Ar}.
The properties of a symmetrizing form on $\cH(G(de,e,r),\bu)$ have been
investigated in Malle--Mathas \cite{MM}. It is not clear, though, that it
satisfies the additional properties mentioned in Conjecture~\ref{conj}.
Conjecture~\ref{conj2} has been verified for $G(d,1,r)$ by Bremke--Malle
\cite{BrMa}.

Explicit models for the irreducible representations of the generic cyclotomic
Hecke algebra for the imprimitive complex reflection group $G(d,1,r)$ have
been given by Ariki--Koike \cite{AK}, and have been extended to $G(de,e,r)$ 
by Ariki \cite{Ar}. However, these models are over $K_W(u^{1/e}_{s,i})_{s,i}$.

Models for the case $G(d,1,r)$ are known over $A$, using the fact that this is a
cellular algebra, and that the generators act with coefficients in $A$
on a cellular basis. For example this can be seen from Dipper--James--Mathas
\cite{DJM}. Let the generators of $\cH(W,\bu)$ correspond to the diagram
\newcommand{\nnode}[1]{{\kern 5pt\hbox to
0pt{\hss{$\mathop\bigcirc\limits_{#1}$}\hss}\kern 5pt}}
\newcommand{\sbar}{{\vrule width20pt height3pt depth-2pt}}
\newcommand{\dbar}{{\rlap{\vrule width20pt height2pt depth-1pt} 
                 \vrule width20pt height4pt depth-3pt}}
$$\nnode{T_0}\dbar\nnode{T_1}\sbar\nnode{T_2}\ \cdots\ \nnode{T_{r-1}}\kern 4pt,$$
where $T_0$ has parameters $Q_1=u_{0,0},\ldots,Q_d=u_{0,d-1}$ and the $T_i$ 
($i\ne 1$) have parameters $q=u_{1,0}, -1=u_{1,1}$.
Then the action of the $T_i$ in a cellular basis is given by 
\cite[3.15 and 3.18]{DJM}, while the action of $T_0$ is given by
\cite[3.20]{DJM} (note that only the term $x_1$ of loc. cit. subsists in the
model for the representation $\lambda$).
\par
We are not aware of similarly nice integral/rational models for the
representations of $\cH(G(de,e,r),\bu)$, where $e>1$.

%%%%%%%%%%%%%%%%%%%%%%%%%%%%%%%%%%%%%%%%%%%%%%%%%%%%%%%%%%%%%%%%%%%%%%%%%
\section{Two-dimensional primitive groups} \label{sec:2-dim}

In this section we describe the construction of models for the irreducible
representations of the Hecke algebras $\cH(W,\bu)$, where $W$ is a primitive
2-dimensional reflection group, so one of the groups $G_4,\ldots,G_{22}$.
We first describe several reductions. \par
\noindent {\bf Step 1:} It is sufficient to find models in the case of
$G_7,G_{11}$ and $G_{19}$.\par
The  braid groups of $G_7$, $G_{11}$ and $G_{19}$ are isomorphic to the same
group
$$B:=\langle \bs_1, \bs_2, \bs_3 \mid
     \bs_1\bs_2\bs_3=\bs_2\bs_3\bs_1=\bs_3\bs_1\bs_2 \rangle$$
(see \cite[\S5]{BMR}).
Let $\bu=(x_1,x_2;y_1;y_2;y_3;z_1,\ldots,z_k)$, where $k=3$ for $G_7$ (resp.
4, 5 for $G_{11}$, $G_{19}$). The cyclotomic Hecke algebra $\cH(G,\bu)$ of
$G_7$ (resp. $G_{11}$, $G_{19}$) is the quotient of the group algebra of $B$
over $\ZZ[\bu,\bu^{-1}]$ by the relations
$$(\bs_1-x_1)(\bs_1-x_2)=0,\quad(\bs_2-y_1)(\bs_2-y_2)(\bs_2-y_3)=0,\quad
  \prod_{i=1}^{i=k}(\bs_3-z_i)=0.$$
In turn the Hecke algebras for $G_4$ to $G_6$ are subalgebras of suitable
partial specializations of that for $G_7$ (the same holds for $G_8$ to
$G_{15}$ with respect to $G_{11}$ and for $G_{16}$ to $G_{22}$ with respect to
$G_{19}$) (see \cite[Prop.~4.2]{MaD}). More precisely, in each case, these
algebras are generated by suitable conjugates of a subset of the generators
(or of some power of them), while the other generators are specialized to the
group algebra. The necessary generators are collected in Table~\ref{tab:gens}.

\begin{table}[htbp]
\caption{Generators for Hecke algebras of 2-dimensional primitive groups}
  \label{tab:gens}
$$\begin{array}{c|c}
 W& \text{generators of }\cH(W)\cr
\hline
 G_4, G_8, G_{16}& \bs_3, \lexp{\bs_1}\bs_3\cr
 G_5, G_{10}, G_{18}& \bs_2, \bs_3\cr
 G_6, G_9, G_{17}& \bs_1, \bs_3\cr
 G_{14}, G_{21}& \bs_1, \bs_2\cr
 G_{12}, G_{22}& \bs_1,\lexp{\bs_2}\bs_1,\bs_1^{\bs_2}\cr
 G_{20}& \bs_2, \lexp{\bs_1}\bs_2\cr
 G_{13}& \bs_3^2,\bs_1, \bs_1^{\bs_2}\cr
 G_{15}& \bs_1,\bs_2, \bs_3^2\cr
\end{array}$$
\end{table}

Moreover, each irreducible representation of the Hecke algebra of any of the
groups $G_4,\ldots,G_{22}$ can be obtained as the restriction of an
irreducible representation of the Hecke algebra of one of $G_7,G_{11}$ or
$G_{19}$. It follows that it is sufficient to determine the representations
of the Hecke algebras of $G_7,G_{11}$ and $G_{19}$ to determine the
representations of the Hecke algebras of all 2-dimensional primitive reflection
groups.
\vskip 1pc

\noindent {\bf Step 2:} It is sufficient to compute irreducible representations
of $B$ of dimension $2\le d\le 6$, with an additional condition on the
eigenvalues of the generators. \par
The  irreducible representations  of $G_7$  have dimension 1,2 or 3, those of
$G_{11}$  dimension 1 to 4 and those of  $G_{19}$ dimension 1 to 6. It follows
that  any  2-dimensional  representation  of  $B$  gives  a  representation of
$\cH(W,\bu)$  where  $W$  is  any  of  $G_7,G_{11},G_{19}$;  any 3-dimensional
representation  of $B$ where  $\bs_1$ has only  2 distinct eigenvalues gives a
representation  of the same algebras;  any 4-dimensional representation of $B$
where  $\bs_1$ has only 2 distinct eigenvalues and $\bs_2$ has only 3 distinct
eigenvalues  gives  a  representation  of  $\cH(W,\bu)$  where  $W$  is any of
$G_{11},G_{19}$;  finally any 5  or 6-dimensional representation  of $B$ where
$\bs_1$ has  only 2  distinct eigenvalues,  $\bs_2$ has only 3 distinct
eigenvalues and $\bs_3$ has only 5 distinct eigenvalues gives a representation
of $\cH(G_{19},\bu)$.
\vskip 1pc

\noindent {\bf Step 3:} It is sufficient to compute \emph{one} irreducible
representations of $B$ in each dimension $2\le d\le 6$.\par
For each dimension (from 1 to 6) and each $W\in\{G_7,G_{11},G_{19}\}$, the
irreducible representations of $\cH(W,\bu)$ (up to isomorphism) form a single
orbit under the Galois automorphisms corresponding to permuting the
$x_i$, the $y_i$, the $z_i$ among themselves. It transpires that we need just
to find one representation of $B$ of the right dimension with the required
number of eigenvalues.\par

\vskip 1pc
It turns out that one can find such representations of $B$ by
matrices of the form
\def\addots{%
	\mathinner{\mkern1mu\raise1pt\vbox{\kern7pt\hbox{.}}\mkern2mu
    \raise4pt\hbox{.}\mkern2mu\raise7pt\hbox{.}\mkern1mu}}
$$
\bs_1\mapsto
\begin{pmatrix}
*&\ldots&\ldots&*\\
0&\ddots& &\vdots\\
\vdots&\ddots&\ddots&\vdots\\
0&\ldots&0 &*\\ 
\end{pmatrix},\ \ 
\bs_2\mapsto
\begin{pmatrix}
*&\ldots&\ldots&*\\
\vdots& &\addots&0\\
\vdots&\addots&\addots&\vdots\\
*&0&\ldots&0\\ 
\end{pmatrix},\ \ 
\bs_3\mapsto
\begin{pmatrix}
0&\ldots&0 &*\\ 
\vdots&\addots&\addots&\vdots\\
0&\addots& &\vdots\\
*&\ldots&\ldots&*\\
\end{pmatrix}.
$$

A solution for the 2-dimensional representation is
$$\bs_1\mapsto
\begin{pmatrix}x_1&\frac{y_1+y_2}{y_1y_2}-\frac{(z_1+z_2)x_2}r\\0&x_2
\end{pmatrix},\ 
\bs_2\mapsto
\begin{pmatrix} y_1+y_2&1/x_1\\ -y_1y_2x_1&0 \end{pmatrix},\ 
\bs_3\mapsto
\begin{pmatrix} 0&\frac {-r}{y_1y_2x_1x_2}\\ r&z_1+z_2 \end{pmatrix}$$
where $r=\sqrt{x_1x_2y_1y_2z_1z_2}$. Note that the irrationality $r$ occurring
in the matrices is necessary \cite[Tab.~8.1]{MaR}.

A solution for the 3-dimensional representation is
$$\bs_1\mapsto
\begin{pmatrix}
x_1& 0&((z_2z_3+z_1z_3+z_1z_2)x_2x_1r^{-1}
      -\frac{(y_3+y_1+y_2)r}{y_1y_2y_3})z_1^{-1}\\
0& x_1& -r(y_1y_2y_3z_1)^{-1}\\
0& 0& x_2\\
\end{pmatrix},
$$
$$
\bs_2\mapsto
\begin{pmatrix}
y_1+y_2+y_3-r(x_1z_1)^{-1}&a{z_1}^{-1}& -1\\
1& r(x_1z_1)^{-1}& 0\\
y_1y_2y_3x_1z_1r^{-1}& 0& 0\\
\end{pmatrix},
$$
$$
\bs_3\mapsto
\begin{pmatrix}
0& 0& z_2z_3x_1r^{-1}\\
0& z_1& 0\\
-r{x_1}^{-1}& a& z_3+z_2\\
\end{pmatrix},
$$

where $a=(y_3+y_1+y_2)rx_1^{-1}-(y_1y_3+y_1y_2+y_3y_2)z_1
+y_1y_2y_3(x_1z_1^2-x_2z_2z_3)r^{-1}$ and

where $r=\sqrt[3]{x_1^2x_2y_1y_2y_3z_1z_2z_3}$.

A solution for the 4-dimensional representation is
$$
\bs_1\mapsto
\begin{pmatrix}
x_1& 0&x_1a-x_1x_2y_1\frac br&x_1(1+\frac{y_1}{y_3})-
\frac r{y_3}\sum_i\frac1{z_i}\\
0& x_1& \frac1{y_1}+\frac1{y_2}& -\frac{x_2}{r^3}\\
0& 0& x_2& 0\\
0& 0& 0& x_2\\
\end{pmatrix},
$$
$$
\bs_2\mapsto
\begin{pmatrix}
y_3+y_1&x_1y_1y_2a&y_1a&y_1\\
0&y_1+y_2&1/x_1&0\\
0&-x_1y_1y_2&0&0\\
-y_3&0&0&0\\
\end{pmatrix},
$$
$$
\bs_3\mapsto
\begin{pmatrix}
0&0&0&-r/(y_3x_2)\\
0&0&-r/(y_2x_1x_2y_1)&0\\
0&r&0&1/r^2\\
r/(x_1y_1)&-ra&b&\sum_i z_i\\
\end{pmatrix},
$$
where
$$a=x_1x_2y_1y_2\prod_j z_j(\sum_i\frac 1{z_i})-r^2\sum_{i=1}^4 z_i,\quad
  b=x_1x_2y_1(y_2+y_3)\prod_i z_i-r^2\sum_{i<j}z_iz_j$$
and where $r=\sqrt[4]{x_1^2x_2^2y_1y_2y_3^2z_1z_2z_3z_4}$.

We refer to the GAP-part of the Chevie system \cite{MChevie} for
solutions for the 5-dimensional and 6-dimensional representations of $B$.
By our above reductions, this completes the construction of the irreducible
representations of all cyclotomic Hecke algebras attached to 2-dimensional
exceptional complex reflection groups.

%%%%%%%%%%%%%%%%%%%%%%%%%%%%%%%%%%%%%%%%%%%%%%%%%%%%%%%%%%%%%%%%%%%%%%%%%
\section{Hensel lifting and Pad\'e approximation} \label{sec:hensel}

We now describe computational techniques used to obtain models for
irreducible representations of Hecke algebras for higher dimensional
primitive complex reflection groups. It is not an algorithm in the sense
that it does not always succeed, but in the case of one-parameter algebras,
it turned out to have a good rate of success. It consists of Hensel lifting
representations of $W$ to $\cH(W)$, combined with Pad\'e approximation.

We note that for groups generated by true reflections which are all
conjugate, such as $G_{24}$, $G_{27}$, $G_{29}$, $G_{31}$, $G_{33}$ and
$G_{34}$, there are only two parameters $u_{s,0}$ and $u_{s,1}$, and with the
usual normalization $u_{s,1}=-1$ (corresponding to replacing $\bs$ by
$-\bs/u_{s,1}$) there is only one parameter $q=-u_{s,0}/u_{s,1}$. We will
write $\cH(W,q)$ for such an algebra.

\subsection{Hensel lifting representations of $W$}
We start with a presentation of $B(W)$, of the form
$$\langle \bs_1,\ldots,\bs_n\mid
  p_j(\bs_1,\ldots,\bs_n)=q_j(\bs_1,\ldots,\bs_n)\rangle$$
as explained in (\ref{presentation}).

If $\rho_q:\cH(W,q)\rightarrow M_{l\times l}(\CC(q))$ is a representation of
$\cH(W,q)$ over $\CC(q)$, and if $M_i=\rho_q(\bs_i)$, the idea consists 
in writing $M_i$ as a series in the variable $r:=q-1$. If we have such a
development $M_i=M_i^{(0)}+r M_i^{(1)}+r^2 M_i^{(2)}+\ldots$ where
$M_i^{(j)}\in M_{l\times  l}(\CC)$, then $M_i^{(0)}$ is the specialization
$\rho_1$ of $\rho_q$ at $q=1$, a representation of $W$.

Conversely,  if we start with a representation  $\rho_1$ of $W$, we may try to
extend it to a representation of $\cH(W,q)$ by solving the system of equations
$$(M_i+1)(M_i-r-1)=0\qquad\text{and}\quad
  p_j(M_1,\ldots,M_n)=q_j(M_1,\ldots,M_n),$$
where $M_i=M_i^{(0)}+r M_i^{(1)}+r^2 M_i^{(2)}+\ldots\in
M_{l\times l}(\CC((r)))$ are formal power series
with $M_i^{(0)}=\rho_1(\bs_i)$.  The point here is that if we already have a
solution $M_i^{(j)}$ for  $j<j_0$ (where  $j_0\ge1$)  then the  equations  for
$M_i^{(j_0)}$  form a system of linear equations,  which, if $v$ is the vector
of all the entries  $(M_i^{(j_0)})_{k,l}$ of the  matrices $M_i^{(j_0)}$, has
the form $\Lambda v=N_{j_0}$, for a matrix $\Lambda$ which is independent of
$j_0$.

Unfortunately the matrix $\Lambda$ does not have full rank in practice. To try
to  solve the above system, we choose  for $j_0=1$ a matrix $\Lambda'$ of full
rank  extending  $\Lambda$,  and  then  solve  iteratively  each step $j_0$ by
setting  $v=\Lambda^{\prime-1}  N_{j_0}$.  We  thus  get  a  representation of
$\cH(W,q)$ with coefficients in $\CC[[r]]$; actually in $K[[r]]$ if $K$ is the
field where the entries of the matrices $M_i^{(0)}$ lie.

In our computations it happened quite often that this representation is
actually over $K(r)=K(q)$. This is the point of the method. To increase the
probability that this happens, we found a number of heuristics:

\begin{itemize}
 \item As equations added to $\Lambda$ to make $\Lambda'$, we first try to add
  equations specifying that undetermined entries in $M_i^{(1)}$ where the 
  corresponding entry in $M_i^{(0)}$ is $0$ should also be $0$.
 \item  If the chosen model of $\rho_1$ given by the matrices $M_i^{(0)}$ does
  not give good results, change the model randomly (but such that it is still
  \lq simple\rq) until a better result occurs.
\end{itemize}

\subsection{Recognizing the entries}
To recognize that the obtained series $M_i\in M_{l\times l}(K[[r]])$ lies in
$K(r)$,  we use {\em  Pad\'e approximation}: if  a series $h\in K[[r]]$, which
can  be assumed to have  a non-zero constant coefficient,  is the expansion of
$f/g\in  K(r)$ where $f,g\in K[r]$  are of degree less  than $d$ and $g(0)=1$,
then $f$ and $g$ are determined by solving linear equations involving only the
first  $2d$ terms of  $h$. If these  linear equations have  a solution, we say
that  $f/g$ is a Pad\'e approximant of $h$. \par
This is applied to the (approximate) entries of $M_i$ as follows: We compute
Pad\'e approximants for increasing $d$, until they become stationary, which
generally means that we have found a solution in $K(r)$.

Note that it is very easy afterwards to check whether the result of our
computations does indeed define a representation of $\cH(W,q)$, by just
evaluating the relations of $\cH(W,q)$.

The representations of $\cH(W,q)$ are in general not defined over $\CC(q)$ but
over  $\CC(q^{1/e})$ where $e$ is the order of  the group of roots of unity in
$K_W$. To handle this case it is sufficient to take $r:=q^{1/e}-1$ as a variable
and apply the same construction.

\subsection{Finding good models for representations of $W$}
To start the process we needed to get a complete set of models for the
irreducible representations of $W$. For this, we used the following techniques:

\begin{itemize} 
 \item Get new representations from known representations by tensoring by
  linear characters and applying Galois actions.
 \item Get new representations as Schur functors of known representations
  (when such Schur functors happen to give an irreducible representation; 
  an example is that the exterior powers $\Lambda^i V$ are always irreducible
  and the symmetric square $S^2V$ is
  irreducible if $W$ is not real). We have written a Chevie-program
  to compute general Schur functors to do this.
\end{itemize}

For  example only 7 of the 90 representations of dimension $\le 60$ of
$G_{32}$ cannot be obtained by the above process starting from the
reflection representation. To get the remaining representations, we need one
more technique:

\begin{itemize} 
 \item  Obtain the desired representation as a component of multiplicity $1$ in
  the tensor product of two known representations. Then compute a model by
  explicitly computing the projector on the desired isotypic component.
\end{itemize}

It  turns out  that all irreducible representations of exceptional complex
reflection groups can be obtained from the reflection
representation applying  these three steps.  To compute the projector on the
isotypic  component, we explicitly compute the image of the class sums of $W$
in  the representation,  by enumerating  the elements  of $W$  as words in the
generators  and computing their images. We have carried out this computation
for all groups considered excepted  $G_{34}$ where  this would need to add
together billions of matrices of rank several tens of thousands, which is a
larger computation that those we have attempted.

For the questions to be considered below, but also for other computational
purposes, it is desirable
to have a model with few non-zero entries, which are integral if possible.
We try to achieve this by performing suitable base changes on the first model.
A good heuristic which tends to simplify the model a lot is to use a basis
consisting of one-dimensional intersections of eigenspaces of the
matrices $M_i^{(0)}$.

An example of a representation obtained by the methods of this section and
that we could not obtain in another way is the representation $\phi_{8,5}$ of
$\cH(G_{24},\{x,y\})$ (here $v=\sqrt{-xy}$, and $\bs,\bt,\bu$ are the
generators in the presentation $P_1$ given below in  \ref{subsec:G24}):

$$ \bs\mapsto
\begin{pmatrix}
.&.&.&.&.&.&.&-x\\
.&x+y&.&.&y&.&.&.\\
.&.&x&-vy+xy&.&.&-x^2&.\\
.&.&.&y&.&.&.&.\\
.&-x&.&.&.&.&.&.\\
.&.&.&x&.&x&-v-y&.\\
.&.&.&.&.&.&y&.\\
y&.&.&.&.&.&.&x+y\\
\end{pmatrix}
$$
$$\bt\mapsto
\begin{pmatrix}
x&.&.&v&.&.&.&-y\\
.&x&.&v&x&.&.&.\\
.&.&x+y&.&.&.&-xy&.\\
.&.&.&y&.&.&.&.\\
.&.&.&.&y&.&.&.\\
.&.&-1&x&-v&x&x&v\\
.&.&1&.&.&.&.&.\\
.&.&.&.&.&.&.&y\\
\end{pmatrix}
$$
$$\bu\mapsto
\begin{pmatrix}
y&.&.&.&.&.&.&.\\
.&x&.&.&x&.&-v&.\\
-xy&.&x&.&-vy&vy&vy-xy-x^2&.\\
.&.&.&x&.&-y&-v-y&.\\
.&.&.&.&y&.&.&.\\
.&.&.&.&.&y&.&.\\
.&.&.&.&.&.&y&.\\
x&.&.&.&.&.&x&x\\
\end{pmatrix}
$$

Another example is the representation $\phi_{8,6}$ of
$\cH(G_{27},\{x,y\})$ where again $v=\sqrt{-xy}$, and $\bs,\bt,\bu$ are the
generators in the presentation $P_1$ given below in  \ref{subsec:G27}):

$$\bs \mapsto
\begin{pmatrix}
x&.&-y&.&.&.&v-y\frac{1+\sqrt 5}2&.\\
.&x+y&.&y&.&.&.&.\\
.&.&y&.&.&.&.&.\\
.&-x&.&.&.&.&.&.\\
.&.&.&.&.&.&.&x\\
.&.&v+x\frac{1+\sqrt 5}2&.&.&x&x&.\\
.&.&.&.&.&.&y&.\\
.&.&.&.&-y&.&.&x+y\\
\end{pmatrix}
$$

$$\bt \mapsto
\begin{pmatrix}
x&.&-y&y&.&v&v&-y\\
.&x&-x\frac{3+\sqrt 5}2&x&.&.&.&.\\
.&.&y&.&.&.&.&.\\
.&.&.&y&.&.&.&.\\
.&.&x&.&x&.&.&y\\
.&.&.&.&.&x+y&y&.\\
.&.&.&.&.&-x&.&.\\
.&.&.&.&.&.&.&y\\
\end{pmatrix}
$$

$$\bu \mapsto
\begin{pmatrix}
.&.&-x&.&.&.&.&.\\
.&x&.&x&.&.&-y\frac{3+\sqrt 5}2&.\\
y&.&x+y&.&.&.&.&.\\
.&.&.&y&.&.&.&.\\
.&.&.&.&y&.&.&.\\
v&.&v&-y&v&x&x&.\\
.&.&.&.&.&.&y&.\\
.&.&.&.&-x&.&v&x\\
\end{pmatrix}
$$

%%%%%%%%%%%%%%%%%%%%%%%%%%%%%%%%%%%%%%%%%%%%%%%%%%%%%%%%%%%%%%%%%%%%%%%%%
\section{Presentations of $B(G_{24})$ to $B(G_{34})$}\label{sec:Presentations}

In  \cite{BMi} we considered presentations of exceptional complex braid groups,
and  proposed several presentations for $B(G_{24})$, $B(G_{27})$, $B(G_{29})$,
$B(G_{33})$  and $B(G_{34})$. In the  context of our current  work, it will be
important  to consider  alternative presentations,  since at  least two of the
properties  we consider (the existence of  $W$-graphs and the vanishing of the
trace on minimal length elements) turn out to depend on the presentation, with
each  time a presentation faring better than  the others with respect to these
properties.  We have noticed a framework  in which these various presentations
fit and can be systematically recovered.

Since  the  groups  above  are  well-generated,  they  have  a  unique maximal
reflection  degree $h$ called the {\em Coxeter number} of $W$. We proved in
\cite{BMi} that in each case, the product $\delta=\bs_1\cdots\bs_n$ of the
generators of the braid group in a certain order is an $h$-th root of the
generator of the center of the pure braid group. The image $c$ of $\delta$ in
$W$ is
$e^{2i\pi/h}$-regular  in the sense of Springer, and if choosing for basepoint
a   $e^{2i\pi/h}$-regular  eigenvector  $x$  of   $c$,  the  element  $\delta$
corresponds  to a path joining $x$ to $e^{2i\pi/h}x$ (these two points coincide
in $V^\reg/W$).

We  consider the {\em Hurwitz action}  of the ordinary Artin braid group $B_n$,
the group with presentation
$$\langle\sigma_1,\ldots,\sigma_{n-1}\mid    \sigma_i\sigma_j=\sigma_j\sigma_i
  \text{ if $|i-j|>1$},
  \sigma_i\sigma_{i+1}\sigma_i=\sigma_{i+1}\sigma_i\sigma_{i+1}\rangle$$
on decompositions  $\delta=\bs_1\ldots\bs_n$ of $\delta$, given by
$$\sigma_i:\ (\bs_1,\ldots,\bs_n)\mapsto
  (\bs_1,\ldots,\bs_{i+1},\bs_i^{\bs_{i+1}},\ldots,\bs_n),$$
so that
$$\sigma_i^{-1}:(\bs_1,\ldots,\bs_n)\mapsto
  (\bs_1,\ldots,\lexp{\bs_i}\bs_{i+1},\bs_i,\ldots,\bs_n).$$
We thus obtain new decompositions of $\delta$ into a product of $n$ braid
reflections.

Bessis  has  shown  in  \cite{Be}  that  the  orbit  of  the Hurwitz action on
decompositions  of $\delta$  is finite,  of cardinality  $n! h^n/|W|$. What we
noticed  is that all the presentations of  \cite{BMi} correspond to taking as a
set  of generators the ones  which appear in one  decomposition in the Hurwitz
orbit; in the case of $G_{24}$ and $G_{27}$ any element of a Hurwitz orbit
corresponds up to some permutation to one of the presentations given in
\cite{BMi}; in the other cases some other presentations may appear.

We  give now  the results  in each  case. We  found that  the ``quality'' of a
presentation seems to be correlated to how ``spread out'' their ``Poincar\'e''
polynomial  $\sum_{w\in W}q^{l(w)}$ is (where $l(w)$  is the minimal length in
terms  of the  generators needed  to write  $w$); the  presentations where the
Poincar\'e polynomial has a higher degree are better.

\subsection{Presentations for $B(G_{24})$}\label{subsec:G24}

A Hurwitz orbit of  $\delta$ has 49  elements. Three different presentations
appear  along an orbit. 

\subsubsection*{$P_1$}
The presentation $P_1$ is
$$
  \langle \bs,\bt,\bu\mid
  \bs\bt\bs=\bt\bs\bt, \bt\bu\bt\bu=\bu\bt\bu\bt, \bs\bu\bs=\bu\bs\bu,
  (\bt\bu\bs)^3=\bu\bt\bu(\bs\bt\bu)^2
\rangle
$$
It appears 21 times in a Hurwitz orbit.
Its Poincar\'e polynomial is
\begin{multline*}
q^{15}+3q^{14}+6q^{13}+12q^{12}+27q^{11}+46q^{10}+55q^9+54q^8+44q^7\hfill\\
\hfill+31q^6+ 22q^5+15q^4+10q^3+6q^2+3q+1$$
\end{multline*}

\subsubsection*{$P_2$}
The presentation $P_2$ is
$$
  \langle \bs,\bt,\bu\mid
  \bs\bt\bs\bt=\bt\bs\bt\bs, \bt\bu\bt\bu=\bu\bt\bu\bt, \bs\bu\bs=\bu\bs\bu,
  \bt(\bs\bt\bu)^2=(\bs\bt\bu)^2\bs
\rangle
$$
We get $P_2$ from $P_1$ by taking $\{\bs,\bt\bu\bt^{-1},\bt\}$ as generators.
It appears 21 times in a Hurwitz orbit.
Its Poincar\'e polynomial is
$$q^{13}+4q^{12}+16q^{11}+39q^{10}+56q^9+58q^8+52q^7+42q^6+29q^5+18q^4
+11q^3+6q^2+3q+1$$

\subsubsection*{$P_3$}
The presentation $P_3$ is
$$
  \langle \bs,\bt,\bu\mid
  \bs\bt\bs\bt=\bt\bs\bt\bs, \bt\bu\bt\bu=\bu\bt\bu\bt, 
  \bs\bu\bs\bu=\bu\bs\bu\bs,
  (\bt\bu\bs)^2\bt=(\bs\bt\bu)^2\bs=(\bu\bs\bt)^2\bu
\rangle
$$
We get $P_3$ from $P_1$ by taking
$\{\bt,\bu,\bu^{-1}\bt^{-1}\bs\bt\bu\}$ as generators.
It appears 7 times in a Hurwitz orbit.
Its Poincar\'e polynomial is
$$q^{13}+5q^{12}+12q^{11}+24q^{10}+45q^9+54q^8+59q^7+57q^6+36q^5+21q^4
+12q^3+6q^2+3q+1$$

\subsection{Presentations for $B(G_{27})$}\label{subsec:G27}
A Hurwitz orbit of  $\delta$ has 75  elements. Five different presentations
appear  along an orbit, each 15 times.

\subsubsection*{$P_1$}
The presentation $P_1$ is
$$
  \langle \bs,\bt,\bu\mid
  \bt\bs\bt=\bs\bt\bs, \bu\bs\bu=\bs\bu\bs, \bu\bt\bu\bt=\bt\bu\bt\bu,
  \bu\bt\bu(\bs\bt\bu)^3=(\bt\bu\bs)^3\bt\bu\bt
  \rangle
$$
Its Poincar\'e polynomial is
\begin{multline*}
q^{25}+5q^{24}+12q^{23}+26q^{22}+51q^{21}+88q^{20}+125q^{19}+150q^{18}
 +168q^{17}+191q^{16}+218q^{15}\hfill\\
 \hfill +223q^{14}+200q^{13}+168q^{12}+139q^{11}+114q^{10}+87q^9+62q^8\\
 \hfill +44q^7+31q^6+22q^5+15q^4+10q^3+6q^2+3q+1\\
\end{multline*}

\subsubsection*{$P_2$}
The presentation $P_2$ is
$$
\langle \bs,\bt,\bu\mid
  \bs\bu\bs=\bu\bs\bu, \bs\bt\bs\bt=\bt\bs\bt\bs, \bt\bu\bt\bu\bt=\bu\bt\bu\bt\bu,
  (\bu\bt\bs)^2\bt=\bs(\bu\bt\bs)^2
  \rangle
$$
We get $P_2$ from $P_1$ by taking $\{\bt,\bt\bu\bt^{-1},\bs\}$ as generators.
Its Poincar\'e polynomial is
\begin{multline*}
q^{21}+6q^{20}+22q^{19}+59q^{18}+107q^{17}+152q^{16}+208q^{15}+256q^{14}
+270q^{13}+255q^{12}+218q^{11}\hfill\\
\hfill+177q^{10}+137q^9+100q^8+71q^7+49q^6+32q^5+19q^4+11q^3+6q^2+3q+1\\
\end{multline*}

\subsubsection*{$P_3$}
The presentation $P_3$ is
$$
  \langle \bs,\bt,\bu\mid
  \bs\bt\bs=\bt\bs\bt, \bt\bu\bt\bu\bt=\bu\bt\bu\bt\bu, \bs\bu\bs=\bu\bs\bu,
  \bt\bu\bt\bu(\bs\bt\bu)^2=\bu(\bt\bu\bs)^3
  \rangle
$$
We get $P_3$ from $P_1$ by taking $\{\bs,\bs^{-1}\bu\bs,\bt\}$ as
generators. Its Poincar\'e polynomial is
\begin{multline*}
q^{23}+3q^{22}+6q^{21}+21q^{20}+60q^{19}+121q^{18}+164q^{17}+192q^{16}
+228q^{15}+256q^{14}+245q^{13}\hfill\\
\hfill+210q^{12}+175q^{11}+138q^{10}+106q^9+78q^8+57q^7+38q^6+25q^5
+16q^4+10q^3+6q^2+3q+1\\
\end{multline*}

\subsubsection*{$P_4$}
The presentation $P_4$ is
$$
\left\langle \bs,\bt,\bu\mid
  \begin{aligned}
  &\bs\bt\bs\bt=\bt\bs\bt\bs, \bt\bu\bt\bu\bt=\bu\bt\bu\bt\bu,
   \bs\bu\bs\bu\bs=\bu\bs\bu\bs\bu,\\
  &(\bt\bu\bs)^2\bt=\bs(\bt\bu\bs)^2,
   \bu\bs(\bt\bu\bs)^2=(\bs\bt\bu)^2\bs\bu\\
  \end{aligned}
\right\rangle$$
We get $P_4$ from $P_1$ by taking $\{\bt,\bu,\bu^{-1}\bt^{-1}\bs\bt\bu\}$ as
generators. Its Poincar\'e polynomial is
\begin{multline*}
q^{19}+5q^{18}+16q^{17}+54q^{16}+127q^{15}+211q^{14}+257q^{13}+277q^{12}
+288q^{11}+266q^{10}+217q^9\hfill\\
\hfill+164q^8+117q^7+73q^6+42q^5+23q^4+12q^3+6q^2+3q+1\\
\end{multline*}

\subsubsection*{$P_5$}
Finally $P_5$ just presents the opposite group to $P_2$ (the first 3 relations
are  the same and  each side of  the fourth is  reversed). It is obtained from
$P_2$ by exchanging the generators $\bu$ and $\bt$.

\subsection{Presentations for $B(G_{29})$}
For  $B(G_{29})$, in \cite{BMi}  we considered two  presentations on generators
$\{\bs,\bt,\bu,\bv\}$.   These  presentations   correspond  actually   to  two
different    presentations   of   the    parabolic   subgroup   generated   by
$\{\bt,\bu,\bv\}$ which is of type $B(G(4,4,3))$, so we first describe the
situation for this last group.

For $B(G(4,4,3))$ a Hurwitz orbit of $\delta$ has 32 elements; two
presentations occur along the orbit:

\subsubsection*{$P_1$}
The presentation $P_1$ is
$$
  \langle \bt,\bu,\bv\mid
  \bt\bv\bt=\bv\bt\bv, \bu\bv\bu=\bv\bu\bv, \bt\bu\bt\bu=\bu\bt\bu\bt,
  (\bv\bu\bt)^2=(\bu\bt\bv)^2\rangle
$$
It appears 16 times in a Hurwitz orbit. Its Poincar\'e polynomial is
$$3q^8+13q^7+23q^6+22q^5+15q^4+10q^3+6q^2+3q+1.$$

\subsubsection*{$P_2$}
The presentation $P_2$ is
$$
\langle \bt,\bu,\bv\mid
  \bt\bv\bt=\bv\bt\bv, \bu\bv\bu=\bv\bu\bv, \bt\bu\bt=\bu\bt\bu,
  \bv\bt(\bu\bv\bt)^2=(\bu\bv\bt)^2\bu\bv\rangle
$$
We get $P_2$ from $P_1$ by taking $\{\bv^{-1}\bt\bv,\bu,\bv\}$ as
generators.
It appears 8 times in a Hurwitz orbit. Its Poincar\'e polynomial is
$$11q^8+21q^7+18q^6+15q^5+12q^4+9q^3+6q^2+3q+1.$$

A presentation of $B(G_{29})$ can be obtained in each case by adding one
generator $\bs$ and the extra relations
$\bs\bt\bs=\bt\bs\bt, \bs\bu=\bu\bs, \bs\bv=\bv\bs$.

\subsection{Presentations for $B(G_{33})$ and $B(G_{34})$}\label{subsec:G33}
For  $B(G_{33})$, in \cite{BMi}  we considered two  presentations on generators
$\{\bs,\bt,\bu,\bv,\bw\}$ and for $B(G_{34})$ two presentations on generators
$\{\bs,\bt,\bu,\bv,\bw,\bx\}$.  These  presentations  differ  only on the
parabolic subgroup generated by $\{\bt,\bu,\bv,\bw\}$ which is of type
$B(G(3,3,4))$, so we describe the situation for this last group.

For $B(G(3,3,4))$ a Hurwitz orbit of $\delta$ has 243 elements. Five
different presentations occur along the orbit.

\subsubsection*{$P_1$} The presentation $P_1$ is
$$\left\langle \bt,\bu,\bv,\bw\mid
  \begin{aligned}
  &\bu\bt\bu=\bt\bu\bt,\bv\bt\bv=\bt\bv\bt,
   \bv\bu\bv=\bu\bv\bu,\bv\bw\bv=\bw\bv\bw,\\
  &\bt\bw=\bw\bt, \bu\bw=\bw\bu,(\bv\bt\bu)^2=(\bu\bv\bt)^2\\
  \end{aligned}
\right\rangle$$
It appears 108 times in a Hurwitz orbit. Its Poincar\'e polynomial is
$$8q^{12}+40q^{11}+82q^{10}+108q^9+109q^8+95q^7+79q^6+57q^5+35q^4+20q^3+10q^2+4q+1$$

\subsubsection*{$P_2$} The presentation $P_2$ is
$$\left\langle \bt,\bu,\bv,\bw\mid
  \begin{aligned}
  &\bw\bt\bw=\bt\bw\bt,\bu\bt\bu=\bt\bu\bt,
   \bu\bv\bu=\bv\bu\bv,\bw\bv\bw=\bv\bw\bv,\\
  &\bt\bv=\bv\bt,\bw\bu=\bu\bw,
   \bv(\bw\bt\bu\bv)^2=(\bw\bt\bu\bv)^2\bw\\
  \end{aligned}
\right\rangle$$
It appears 9 times in a Hurwitz orbit. Its Poincar\'e polynomial is
$$q^{12}+20q^{11}+74q^{10}+128q^9+130q^8+100q^7+74q^6+52q^5+34q^4+20q^3+10q^2+4q+1$$
We get $P_2$ from $P_1$ by taking 
$\{\bt,\bv,\bw,\bw^{-1}\bv^{-1}\bu\bv\bw\}$ as
generators.

\subsubsection*{$P_3$} The presentation $P_3$ is
$$\left\langle \bt,\bu,\bv,\bw\mid
  \begin{aligned}
  &\bt\bw=\bw\bt, \bu\bw\bu=\bw\bu\bw, \bu\bv\bu=\bv\bu\bv, \bv\bw\bv=\bw\bv\bw,
   \bt\bu\bt=\bu\bt\bu,\\
  &\bt\bv\bt=\bv\bt\bv, \bu\bv\bw\bu=\bw\bu\bv\bw,
   (\bt\bu\bv)^2=(\bv\bt\bu)^2\\
  \end{aligned}
\right\rangle$$
It appears 72 times in a Hurwitz orbit. Its Poincar\'e polynomial is
$$34q^{10}+88q^9+122q^8+132q^7+111q^6+75q^5+45q^4+25q^3+11q^2+4q+1$$
We get $P_3$ from $P_1$ by taking 
$\{\bt,\bv,\bv^{-1}\bu\bv,\bw\}$ as generators.

\subsubsection*{$P_4$} The presentation $P_4$ is
$$\left\langle \bt,\bu,\bv,\bw\mid
  \begin{aligned}
  &\bt\bv\bt=\bv\bt\bv, \bu\bw\bu=\bw\bu\bw, \bt\bw\bt=\bw\bt\bw, 
    \bt\bw\bv\bt=\bv\bt\bw\bv, \bw\bv\bu\bw=\bu\bw\bv\bu, \\
  &\bt\bu\bt=\bu\bt\bu, \bw\bv\bw=\bv\bw\bv, \bu\bv\bu=\bv\bu\bv, 
   (\bt\bu\bw)^2=(\bw\bt\bu)^2, (\bt\bu\bv)^2=(\bu\bv\bt)^2\\
  \end{aligned}
\right\rangle$$
It appears 36 times in a Hurwitz orbit. Its Poincar\'e polynomial is
$$6q^{10}+40q^9+98q^8+148q^7+149q^6+102q^5+58q^4+30q^3+12q^2+4q+1$$
We get $P_4$ from $P_1$ by taking 
$\{\bt,\bu,\bv,\bv\bw\bv^{-1}\}$ as generators.

\subsubsection*{$P_5$} The presentation $P_5$ is
$$\left\langle \bt,\bu,\bv,\bw\mid
  \begin{aligned}
  &\bw\bu=\bu\bw, \bt\bv\bt=\bv\bt\bv, \bv\bu\bv=\bu\bv\bu,
   \bt\bu\bt=\bu\bt\bu, \bt\bw\bt=\bw\bt\bw,\\
  &\bw\bv\bw=\bv\bw\bv, \bt\bw\bv\bu\bt\bw=\bu\bt\bw\bv\bu\bt,
   (\bv\bu\bt)^2=(\bu\bt\bv)^2,(\bw\bv\bt)^2=(\bv\bt\bw)^2\\
  \end{aligned}
\right\rangle$$
It appears 18 times in a Hurwitz orbit. Its Poincar\'e polynomial is
$$q^{10}+28q^9+97q^8+163q^7+162q^6+104q^5+52q^4+25q^3+11q^2+4q+1$$
We get $P_5$ from $P_1$ by taking 
$\{\bt,\bv,\bu,\bu\bv\bw\bv^{-1}\bu^{-1} \}$ as generators.

In each case we obtain a presentation of $B(G_{33})$ by adding one generator
$\bs$ and the relations
$\bs\bt\bs=\bt\bs\bt, \bs\bu=\bu\bs,\bs\bv=\bv\bs,\bs\bw=\bw\bs$.
We then obtain a presentation of $B(G_{34})$ by adding one generator $\bx$
and the relations $\bw\bx\bw=\bx\bw\bx, \bx\bs=\bs\bx,\bx\bt=\bt\bx,
\bx\bu=\bu\bx, \bx\bv=\bv\bx$, except for the representation corresponding
to $P_2$  where the relations we should add are then
$\bw\bx=\bx\bw, \bs\bx\bs=\bx\bs\bx,\bx\bt=\bt\bx,
\bx\bu=\bu\bx, \bx\bv=\bv\bx$ (this presentation can be obtained from the one
corresponding to $P_1$ by taking $\bs\bt\bu\bt^{-1}\bs^{-1},\bs,\bt,
\bv,\bw,\bx$ as generators).

%%%%%%%%%%%%%%%%%%%%%%%%%%%%%%%%%%%%%%%%%%%%%%%%%%%%%%%%%%%%%%%%%%%%%%%%%
\section{Representations from $W$-graphs}\label{sec:Wgraphs}
The notion of a $W$-graph for a representation of a Weyl group originates from
Kazhdan-Lusztig theory. Here, we propose a generalization of this concept
to the case of complex reflection groups. We then deal with computational
issues connected with this.
\subsection{$W$-graphs}
Let $W\le\GL(V)$ be a complex reflection group on $V$. We assume that $W$
is {\em well-generated}, that is, $W$ can be generated by $n:=\dim(V)$
reflections $s_1,\ldots,s_n$. We set $I=\{1,2,\ldots,n\}$ and we
let $d_j$ denote the order of the reflection
$s_j$, $j\in I$, and $d:=\max\{d_j\}$.

The following generalizes the concept of a $W$-graph for a representation of
a finite Coxeter group, see \cite{Gy}. Let $R:W\rightarrow \GL_r(\CC)$ be an
irreducible representation of $W$. A {\em pre-$W$-graph}
$\Gamma$ for $R$ is a sequence $(\gamma_1,\ldots,\gamma_r)$ of $r$ maps
$\gamma_k:I\rightarrow\{0,\ldots,d-1\}$ satisfying $\gamma_k(j)\le d_j-1$ for
all $1\le k\le r$, $j\in I$. The maps $\gamma_k$ are also called the
{\em nodes} of $\Gamma$.

We now define the concept of an {\em admissible} pre-$W$-graph. If $W$ is
cyclic, then $n=1$ and $I=\{1\}$. Any irreducible representation is 1-dimensional,
so $r=1$, and the generating reflection $s_1$ acts by
$R(s_1)=\exp(2\pi im/d_1)$ in $R$ for some $1\le m\le d_1$. Then only the
map $\gamma_1$ with $\gamma_1(1)=m$ is admissible.
Now assume inductively that an admissible pre-$W$-graph has been chosen for
each irreducible representation of each proper parabolic subgroup
$W_J=\langle s_j\mid j\in J\rangle$, where $J\subset I$. The pre-$W$-graph
$\Gamma$ is then called admissible (with respect to the chosen admissible
pre-$W$-graphs of the parabolics), if for each parabolic subgroup $W_J <W$
the restriction of $\Gamma$ to $W_J$ is the union of the pre-$W$-graphs of
the restriction of $R$ to $W_J$. (Here, restriction to a parabolic subgroup
$W_J$, $J\subset I$, is obtained by restricting all $\gamma_j$ to $J$.)

Let $\cH(W,\bu)$ denote the generic cyclotomic Hecke algebra associated to
$W$, where $\bu=(u_{j,m}\mid 1\le j\le n, 0\le m\le d_j-1)$ with 
$u_{j,0},\ldots, u_{j,d_j-1}$ corresponding to $s_j$ as above. 
For each $j$ we choose a total ordering
on the variables $u_{j,0},\ldots,u_{j,d_j-1}$; for example
$u_{j,0}>u_{j,1}>\ldots>u_{j,d_j-1}$. We write $T_j$ for the image in
$\cH(W,\bu)$ of a braid reflection mapping to $s_j$, $1\le j\le n$. Given an
admissible pre-$W$-graph $\Gamma$ for a representation $R$ of $W$, we
associate a {\em pre-representation} of $\cH(W,\bu)$ as follows.
For each $j\in I$, let $T_j$ be an $r\times r$-matrix with diagonal
entries $T_j[k,k]=u_{j,\gamma_k(j)}$. The off-diagonal entry $T_j[k,l]$ is
zero unless $T_j[k,k]<T_j[l,l]$ in the chosen ordering on $\bu$.
The remaining entries of the $T_j$ are independent indeterminates, except
that $T_j[k,l]=T_m[k,l]$ if $s_j,s_m$ are conjugate in $W$ and both
$T_j[k,k]=T_m[k,k]$ and $T_j[l,l]=T_m[l,l]$, for $k\ne l$.

Any specialization of these matrices which define a representation of
$\cH(W,\bu)$ which is a deformation of a conjugate of the given representation
$R$ of $W$ is called a {\em $W$-graph} for $R$. Note that, since $W$ is
admissible, the characteristic polynomials of the $T_j$ are by construction
already as they should be if the $T_j$ did define a representation of $\cH(W)$
specializing to $R$. \par
If all reflections of $W$ are conjugate and of order~2, such a representation
can be encoded in an actual labelled and directed graph as follows: the nodes
of the graph are in bijection with the set $\{1,\ldots,r\}$, labelled by
$\gamma_k(1)$ (note that here $\gamma_k$ is already uniquely determined by
$\gamma_k(1)$). There is a directed edge from $j$ to $k$, labelled by $T_m[j,k]$,
if $T_m[j,k]\ne0$ for some $m$ with $\gamma_j(m)=1$ and $\gamma_k(m)>1$.
Note that the value of $T_m[j,k]$ does not depend on the choice of $m$, by
our convention on pre-representations. Clearly the representation can be
recovered from this graph. Note also that there are just two possible total
orderings of the two variables $u_0,u_1$ in this case, and changing the ordering
amounts to transposing the representing matrices.

For  instance,  here  is  the  graph  for  the  representation of $\cH(W,\bu)$
specializing    to   the   reflection    representation,   where   $W=G_{24}$,
$\bu=\{u_{1,0},u_{1,1}\}=\{x,y\}$.   We  consider  the   matrices  for  the  3
generators for presentation $P_1$ in Section~\ref{subsec:G24}. We have $d=2$,
$r=3$.
\begin{equation}\label{phi31}
\xymatrixcolsep{5pc}
\xymatrixrowsep{5pc}
\xymatrix{
12\ar@<3pt>[rr]^{x(-1-\sqrt{-7})/2}\ar@<3pt>[dr]^{-y}&&13
\ar@<3pt>[ll]^{y(1-\sqrt{-7})/2}\ar@<3pt>[dl]^{-y}\\
&\ar@<3pt>[ul]^x\ar@<3pt>[ur]^x 23&\\
}
\end{equation}

\begin{rem}
Assume for a moment that $W$ is a real reflection group, i.e., a finite Coxeter
group. In this case Gyoja has shown \cite{Gy} that any irreducible
representation has a model which comes from a pre-representation of a
$W$-graph of $W$. 
\end{rem}

In general, for arbitrary complex reflection groups, admissible pre-$W$-graphs
need not always exist. But in cases where pre-$W$-graphs do exist,
examples show that
\begin{itemize}
 \item[(a)] often, there exist corresponding representations, 
 \item[(b)] these representation tend to be very sparse,
 \item[(c)] often the entries can be chosen to be Laurent-polynomials
   in the parameters,
 \item[(d)] all $n$ matrices are equally sparse.
\end{itemize}

\subsection{Existence of pre-$W$-graphs and $W$-graphs}
We say that a representation of $W$ has an admissible pre-$W$-graph if there
exists one for {\em some} presentation of the braid group and for {\em some}
ordering of the variables.
It is pretty straightforward to write a program which enumerates all admissible
pre-$W$-graphs for $W$, given those of the proper parabolic subgroups. In order
for this inductive process to work, we have to consider also some rank 2 groups.

It is much more difficult in general to find, given a pre-representation for a
pre-$W$-graph of $W$, specializations of the entries so that it
actually defines a representation of $\cH(W)$ (which specializes to the
representation $R$ of $W$ we started with).

Let us give some indications on how the $W$-graphs presented below were
constructed. For small representations (of dimension
at most~4), this is straightforward, by solving the non-linear system of
equations obtained by requiring that the given pre-representation satisfies
the relations of $\cH:=\cH(W,\bu)$. For larger dimensions, this system becomes
too
large: if the representation $R$ has dimension~$r$ and $\cH$ has $n$
generators, then the matrices of the generators for the pre-representation
involve at least roughly $nr^2/4$ unknowns. A braid relation of length $m$
in the generators produces algebraic equations of degree~$m$ between these
unknowns. Furthermore, the coefficients in the equations involve all the
parameters $\bu$ of the Hecke algebra. A simple minded application of the
Buchberger algorithm to such a system of equations is bound to fail. \par

Therefore, we had to use several tricks. A look at the final representations
shows that they are very sparse, containing many more zero-entries than
required by the definition of pre-representation. Knowing the positions
of these zeros in advance would allow to solve the system of equations
easily. Thus, in a first step, we tried to conjugate the given representation
of $W$ into the form of a $W$-graph, with as many zeros as possible. Note that
the conditions on the entries of such a conjugating matrix are linear, hence
this system is easy to solve. In general, there will not be a unique
solution, but we chose a solution with as many zeros as possible. Then we
looked for a $W$-graph representation of $\cH$ with zero entries in the same
positions. \par
Alternatively, we started from a representation of $\cH$ obtained by the
methods of Section~\ref{sec:hensel}, for example, and tried to conjugate this
to a $W$-graph representation. \par
For dimensions larger than~10, say, even the determination of such a
conjugating matrix becomes too difficult. Here, one successful approach used
information from maximal parabolic subgroups. Let $W_J$ be a maximal parabolic
subgroup of $W$, and assume that the restriction of $R$ to $W_J$ splits as
$$R|_{W_J}=R_1\oplus\ldots\oplus R_t$$
into a sum of irreducible representations $R_i$ of $W_J$. By induction, we may
assume that $W$-graphs of the $R_i$ for $\cH(W_J)$ are already known. In order
to use this information we made the additional assumption that the block
diagonal part of the $T_j$, with $j\in J$, agrees with the $W$-graphs of the
$R_i$. (This doesn't follow from our axioms on pre-$W$-graphs, and it is in
fact not always satisfied.) This 'Ansatz' again reduces the number of unknowns
considerably. Clearly, the larger the dimensions of the $R_i$ are, that is, the
fewer constituents occur, the more information we obtain.

\subsubsection{Some rank-2 groups}
We describe the situation in some detail for the case of the smallest
well-generated exceptional group $G_4$. There is for each representation
exactly one $W$-graph, as given in Table~\ref{tab:G4}. The labelling of
characters is as in \cite{MaG}, for example.

\begin{table}[htbp]
\caption{$W$-graphs for $G_4$}  \label{tab:G4}
$\begin{array}{|c|c||c|c|}
\hline
 \mbox{character}& \mbox{$W$-graph}& \mbox{character}& \mbox{$W$-graph}\\
\hline
\phi_{1,0}&(12..)& \phi_{2,1}&(.12.,12..)\\
\phi_{1,4}&(.12.)& \phi_{2,3}&(..12,12..)\\
\phi_{1,8}&(..12)& \phi_{2,5}&(..12,.12.)\\
\phi_{3,2}&(.12.,1..2,2..1)& & \\
\hline
\end{array}$
\end{table}

Each 2-dimensional representation admits one further pre-$W$-graph, for
instance $\phi_{2,5}$ admits $(.1.2,.2.1)$; however the only $W$-graph
corresponding to it is a non-irreducible representation (which has same
restriction to parabolic subgroups). For the 3-dimensional representation,
there are 5 more pre-$W$-graphs:
$$(..12,.12.,12..),\quad (..12,1.2.,2.1.),\quad (.1.2,.2.1,12..),\quad
(.1.2,1.2.,2..1),\quad (.2.1,1..2,2.1.)$$
The first 3 give rise to non-irreducible representations, and the last two
do not give rise to any representation.

Similarly, for the group $G_{3,1,2}$, each representation admits one
pre-$W$-graph which is a $W$-graph, except for the 2-dimensional representations
which also admit another pre-$W$-graph giving rise to a non-irreducible 
representation. The same situation holds for the Coxeter groups $A_2$,
$B_2$ and $I_2(5)$.

\subsubsection{Pre-$W$-graphs for $G_{25}$}
The inductive approach now gives the following:

\begin{prop}
 For $G_{25}$, each irreducible representation admits a single pre-$W$-graph
 whose restriction to each parabolic subgroup $G_4$ is an actual $W$-graph.
 For each pre-$W$-graph, there exists a corresponding $W$-graph.
\end{prop}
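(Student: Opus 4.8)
The plan is to establish the proposition in two parts, matching its two assertions: first the uniqueness of the pre-$W$-graph whose restriction to each parabolic $G_4$ is an actual $W$-graph, and then the existence of a corresponding $W$-graph for each such pre-$W$-graph. The group $G_{25}$ is the rank-$3$ well-generated complex reflection group all of whose reflections have order~$3$, and its maximal proper parabolic subgroups are copies of $G_4$ (rank~$2$) and of the cyclic group of order~$3$. Since we have just analyzed $G_4$ completely in Table~\ref{tab:G4}, we are in precisely the inductive situation set up in Section~\ref{sec:Wgraphs}: admissible pre-$W$-graphs for $G_{25}$ are built from the chosen admissible pre-$W$-graphs of its parabolics.

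For the uniqueness statement I would run the enumeration algorithm already described in the text, which lists all admissible pre-$W$-graphs for $W$ given those of its proper parabolic subgroups. Concretely, for each irreducible representation $R$ of $G_{25}$ one first computes, for each maximal parabolic $W_J\cong G_4$, the decomposition $R|_{W_J}=\bigoplus_i R_i$ into irreducibles; by the $G_4$-analysis each $R_i$ has a \emph{unique} $W$-graph. An admissible pre-$W$-graph for $R$ is then a choice of nodes $(\gamma_1,\ldots,\gamma_r)$ whose restriction to every such $J$ reproduces the union of these uniquely determined $G_4$-graphs. The claim is that these simultaneous restriction constraints, coming from the three maximal parabolics of type $G_4$, pin down the nodes $\gamma_k$ uniquely. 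I would verify this case by case over the irreducible characters of $G_{25}$ (there are finitely many, of bounded dimension), checking that the constraints are consistent and admit exactly one solution; this is a finite combinatorial check rather than a structural argument.

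For the existence statement one must, for each such pre-$W$-graph, exhibit an actual specialization of the free off-diagonal entries making the resulting matrices $T_1,T_2,T_3$ satisfy the braid relations of $\cH(G_{25},\bu)$ and specialize to $R$. The representations of $G_{25}$ have dimension at most a small bound, so by the remarks in Section~\ref{sec:Wgraphs} the low-dimensional cases can be handled directly by solving the (nonlinear) system imposed by the braid relations on the entries of the pre-representation; for the larger representations I would invoke the maximal-parabolic Ansatz described there, taking the block-diagonal part of each $T_j$ ($j\in J$) to coincide with the already-known $G_4$-graphs of the constituents $R_i$, which drastically reduces the unknowns. One then checks, as noted in the text, that the candidate matrices genuinely satisfy the relations simply by evaluating them.

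The main obstacle I expect is the existence half, not the uniqueness half: admissible pre-$W$-graphs need not in general carry a genuine $W$-graph (the $G_4$ discussion already shows pre-$W$-graphs that yield only reducible representations or no representation at all), so the content of the proposition is that for $G_{25}$ this pathology does not occur. The hard part is therefore to guarantee that the nonlinear system for the off-diagonal entries is solvable \emph{for every} irreducible $R$, and the honest resolution is computational: carry out the solution explicitly for each character, relying on the sparsity heuristics (conjugating $R$ into a form with as many zeros as the pre-$W$-graph allows, and matching the block-diagonal parabolic data) to make each system tractable. Once a solution is found, verification is immediate by substitution into the relations, so the proposition reduces to a finite, and in this rank-$3$ case feasible, sequence of such constructions.
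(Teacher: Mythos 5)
Your proposal is correct and follows essentially the same route as the paper, which establishes this proposition purely computationally: the inductive enumeration of admissible pre-$W$-graphs seeded by the complete $G_4$ analysis yields the uniqueness claim, and explicit solution of the nonlinear systems for the free entries (using exactly the sparsity and parabolic block-diagonal Ansatz you invoke) yields the actual $W$-graphs recorded in Table~\ref{tab:G25}. One inessential slip: $G_{25}$ has two, not three, standard rank-2 parabolics of type $G_4$, since the generators $s_1$ and $s_3$ commute and $\langle s_1,s_3\rangle$ is abelian of order~9.
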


Table~\ref{tab:G25} contains these $W$-graphs for one representation in each
orbit under Galois automorphisms on the parameters.

\begin{table}[htbp]
\caption{$W$-graphs for $G_{25}$}  \label{tab:G25}
$\begin{array}{|c|c|}
\hline
 \mbox{Character}&\mbox{$W$-graph}\\
\hline
\phi_{1,0}& (123..)\\
\phi_{2,3}& (13.2.,2.13.)\\
\phi_{3,6}& (.123.,13..2,2..13)\\
\phi_{3,1}& (12.3.,13.2.,23.1.)\\
\phi_{6,2}& (1.23.,12..3,13..2,2.13.,23..1,3.12.)\\
\phi_{6,4}''& (.123.,1.23.,13..2,2.1.3,2.3.1,3.12.)\\
\phi_{8,3}& (1.23.,12..3,13..2,13..2,2.1.3,2.3.1,23..1,3.12.)\\
\phi_{9,5}& (.123.,1.2.3,1.3.2,13..2,2..13,2.1.3,2.3.1,3.1.2,3.2.1)\\
\hline
\end{array}$
\end{table}

\subsubsection{Pre-$W$-graphs for $G_{26}$}
\begin{prop}
 For $G_{26}$, all but two 6-dimensional irreducible representations admit at
 least one pre-$W$-graph whose restrictions to both parabolic subgroups of
 type $G_4$ and $G_{3,1,2}$ are actually $W$-graphs. 
 For each pre-$W$-graph, there exists a corresponding $W$-graph.
\end{prop}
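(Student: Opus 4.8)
The plan is to follow the same inductive, computer-assisted framework already used for $G_4$, $G_{3,1,2}$ and $G_{25}$, but now feeding in the $W$-graphs of \emph{both} kinds of maximal parabolic subgroup of $G_{26}$, namely those of type $G_4$ and of type $G_{3,1,2}$. First I would fix a presentation of $B(G_{26})$ on the appropriate set of braid reflections and record, for each irreducible representation $R$ of $G_{26}$, its restriction to the two conjugacy classes of maximal parabolics. Since admissible pre-$W$-graphs for all irreducible representations of $G_4$ and of $G_{3,1,2}$ are already available (as stated in the preceding subsections), the inductive definition of admissibility lets me enumerate, by the straightforward program mentioned in the text, all admissible pre-$W$-graphs of each $R$ whose restrictions to these parabolics are unions of the known pre-$W$-graphs. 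The claim ``whose restrictions \ldots are actually $W$-graphs'' is exactly the condition that the chosen pre-$W$-graphs of the parabolic constituents do lift to genuine representations of $\cH(W_J)$, which we already know they do for $G_4$, $G_{3,1,2}$ (and the rank-2 groups $A_2$, $B_2$, $I_2(5)$ involved in the induction).

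Next I would carry out the existence half of the statement: for each admissible pre-$W$-graph found above, exhibit a specialization of the free entries turning the pre-representation into an actual representation of $\cH(G_{26},\bu)$ that deforms the correct representation $R$ of $W$. Concretely, this means substituting the pre-representation matrices $T_j$ into the defining braid relations of the fixed presentation and solving the resulting polynomial system over $A$ (or a suitable extension). For the low-dimensional constituents this is the direct Gr\"obner/elimination computation described in the text; for the harder cases I would exploit the block-diagonal ``Ansatz'', forcing the $T_j$ with $j\in J$ to agree with the already-known $W$-graphs of the parabolic pieces $R_i$, which slashes the number of unknowns. The verification that a produced solution really is a representation is, as noted, trivial: one just evaluates the relations of $\cH(W,\bu)$ on the candidate matrices.

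The genuinely exceptional content of this proposition is the phrase ``all but two 6-dimensional irreducible representations''. The main obstacle is therefore not the generic induction but the two distinguished $6$-dimensional representations: for these I expect the enumeration to produce \emph{no} admissible pre-$W$-graph satisfying the stated restriction condition, so the work is to prove nonexistence rather than to construct something. I would establish this by exhaustively listing every sequence $(\gamma_1,\ldots,\gamma_6)$ of nodes compatible with the two parabolic restrictions and checking that each fails either the admissibility bookkeeping or, after attempting to solve the associated nonlinear system, the requirement that the lift be irreducible and deform $R$ (the same failure modes already illustrated for $G_4$, where some pre-$W$-graphs give only reducible lifts and others give none at all). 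In short, the proof is a finite verification: run the enumeration to pin down the admissible pre-$W$-graphs, solve the relation systems to produce the corresponding $W$-graphs for all but the two exceptional representations, and confirm by the same enumeration that those two admit no pre-$W$-graph meeting the restriction hypothesis, so the stated exclusion is sharp.
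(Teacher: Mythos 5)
Your proposal is correct and takes essentially the same route as the paper: the proposition there is precisely the outcome of the computer-assisted finite verification you describe, namely the inductive enumeration of admissible pre-$W$-graphs from the known $G_4$ and $G_{3,1,2}$ data, followed by solving the braid-relation systems (using the parabolic block ``Ansatz'' to cut down unknowns) to produce an actual $W$-graph for each surviving pre-$W$-graph. The exclusion of the two $6$-dimensional representations is, as you say, just the negative outcome of the same exhaustive enumeration, so nothing beyond the finite check is needed.
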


Table~\ref{tab:G26} contains the unique pre-$W$-graph for the given
representatives of the Galois orbits.
 
 \begin{table}[htbp]
\caption{$W$-graphs for $G_{26}$}  \label{tab:G26}
$\begin{array}{|c|c|}
\hline
 \mbox{Character}&\mbox{$W$-graph}\\
\hline
\phi_{1,0}& (123..)\\
\phi_{2,3}& (12.3.,13.2.)\\
\phi_{3,1}& (12.3.,13.2.,23.1.)\\
\phi_{3,6}& (1.23.,12..3,13..2)\\
\phi_{6,2}& (1.23.,12..3,13..2,13.2.,2.13.,23.1.)\\
\phi_{8,3}& (1.23.,12..3,13..2,13.2.,2.1.3,2.13.,23.1.,3.1.2)\\
\phi_{9,5}& (1.2.3,1.23.,1.3.2,12..3,13..2,13.2.,2.1.3,2.13.,3.1.2)\\
\hline
\end{array}$
\end{table}

\subsubsection{Pre-$W$-graphs for $G_{24}$}
For $G_{24}$, the situation depends on the presentation of the braid group
$B(G_{24})$ considered, see Section~\ref{subsec:G24}.

\begin{prop}
 For $G_{24}$, for each of the presentations $P_1$ to $P_3$, each
 representation admits at most one pre-$W$-graph whose restrictions to
 parabolic subgroups of type $A_2$ and $B_2$ are $W$-graphs. But for $P_2$ and
 $P_3$ (the same) eight of the twelve representations admit such a graph, while
 for $P_1$ two more representations admit such pre-$W$-graphs.
 For each pre-$W$-graph for $P_1$, there exists a corresponding $W$-graph.
\end{prop}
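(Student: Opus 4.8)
The plan is to treat this as a finite inductive computation, following the enumeration scheme for admissible pre-$W$-graphs described above, combined with a case-by-case verification that the resulting pre-representations admit genuine specializations. First I would fix, once and for all, the (essentially unique) $W$-graphs of every irreducible representation of the rank-$2$ parabolic subgroups of types $A_2$ and $B_2$; these are the base cases of the induction, and as recalled above the situation for $A_2$ and $B_2$ is completely transparent. For each of the presentations $P_1$, $P_2$, $P_3$ of Section~\ref{subsec:G24} I would then read off the three rank-$2$ parabolic subgroups from the braid relations: for $P_1$ the pairs $\{\bs,\bt\}$ and $\{\bs,\bu\}$ carry the $A_2$-relation while $\{\bt,\bu\}$ carries the $B_2$-relation, and analogously (with the appropriate pairs) for $P_2$ and $P_3$.

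Next, for each of the twelve irreducible representations $R$ of $G_{24}$ (whose degrees are $1,1,3,3,3,3,6,6,7,7,8,8$) I would compute the restriction $R|_{W_J}$ to each of the three rank-$2$ parabolics, decompose it into irreducible constituents, and run the enumeration. Since all reflections of $G_{24}$ have order $2$, here $d=2$, so an admissible pre-$W$-graph for $R$ whose rank-$2$ restrictions are $W$-graphs is a multiset of $r=\dim R$ nodes $\gamma_k\colon\{1,2,3\}\to\{0,1\}$ whose projection onto each parabolic $W_J$ reproduces the union of the (fixed) $W$-graphs of the constituents of $R|_{W_J}$. A single node $(\gamma_k(1),\gamma_k(2),\gamma_k(3))$ is already over-determined by any two of its coordinate pairs, so the only freedom lies in how the nodes are matched across the three parabolic decompositions; the ``at most one'' assertion should fall out of the fact that this joint-multiset (marginal-matching) problem admits at most one common refinement. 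Counting the representations for which a consistent gluing actually exists then yields the stated numbers: eight for $P_2$ and $P_3$, which produce the identical list, together with two further representations for $P_1$, giving ten in that case.

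It remains to prove that each of the ten admissible pre-$W$-graphs for $P_1$ lifts to an actual $W$-graph, i.e.\ that the free entries of the associated pre-representation can be specialized so as to satisfy the defining relations of $\cH(G_{24},\bu)$ while deforming a conjugate of $R$. For the representations of dimension at most~$4$ (the degree-$1$ and degree-$3$ characters) this is a direct solution of the nonlinear system obtained from the four relations of $P_1$. The hard part will be the higher-dimensional representations, of dimensions $6$, $7$ and $8$: for these a naive Gr\"obner computation is infeasible, and I would instead exploit the block-diagonal structure forced by the maximal-parabolic restrictions—assuming that the block-diagonal part of each matrix $T_j$, $j$ in a parabolic, agrees with the already-known $W$-graphs of the constituents—to reduce the unknowns to a tractable system, solve it, and then verify a posteriori that the matrices satisfy every relation of $P_1$. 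Once such a specialization is exhibited in all ten cases, evaluating the relations confirms that it defines a representation of $\cH(G_{24},\bu)$, completing the proof.
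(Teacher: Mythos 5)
Your proposal follows essentially the same route as the paper's (implicit, computational) proof: fix the unique $A_2$- and $B_2$-type $W$-graphs as base data, inductively enumerate the admissible pre-$W$-graphs for each of the presentations $P_1$, $P_2$, $P_3$ as a finite marginal-matching problem, read off the uniqueness and the counts from that enumeration, and then produce actual $W$-graphs for $P_1$ by solving the nonlinear relation systems directly in small dimension and via the maximal-parabolic block Ansatz (or by conjugating Hensel-lifted models) in higher dimension. Two small caveats that do not affect the validity of the plan, since everything is settled by the finite computation anyway: the ``at most one'' statement is itself an outcome of the enumeration and does not follow from your over-determination heuristic (as phrased, that argument is circular), and the list of representations requiring a lift for $P_1$ turns out to have dimensions $1,3,6,7$ only, the two $8$-dimensional representations being exactly the ones admitting no pre-$W$-graph at all.
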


Table~\ref{tab:G24} contains the pre-$W$-graphs for the given representatives
of the Galois orbits and presentation $P_1$.
 
\begin{table}[htbp]
\caption{$W$-graphs for $G_{24}$, presentation $P_1$}  \label{tab:G24}
$\begin{array}{|c|c|}
\hline
 \mbox{Character}&\mbox{$W$-graph}\\
\hline
\phi_{1,0}&(132)\\
\phi_{3,1}&(13,12,32)\\
\phi_{6,2}&(13,13,12,12,32,32)\\
\phi_{7,6}&(1,13,12,3,3,2,2)\\
\hline
\end{array}$
\end{table}

The representations $\phi_{8,4}$ and $\phi_{8,5}$ do not admit any
pre-$W$-graph.

A $W$-graph for $\phi_{3,1}$ has been given in \ref{phi31}. With the same
conventions, here is a $W$-graph for $\phi_{6,2}$:

\begin{equation*}
\xymatrixcolsep{5pc}
\xymatrixrowsep{5pc}
\xymatrix{
23\ar@<3pt>[r]^x\ar@<3pt>[dr]^x&
12\ar@<3pt>[l]^{-y}\ar@<3pt>[d]^x\ar@<3pt>[r]^{2x}&
13\ar@<3pt>[d]^x\ar@<3pt>[dr]^{-x}&\\
&13\ar@<3pt>[ul]^{-y}&
12\ar@<3pt>[l]^{-y}\ar@<3pt>[u]^{-2y}\ar@<3pt>[r]^{-y}&
23\ar@<3pt>[l]^x\ar@<3pt>[ul]^y\\
}
\end{equation*}

And here is a $W$-graph for $\phi_{7,6}$:
\begin{equation*}
\xymatrixcolsep{5pc}
\xymatrixrowsep{5pc}
\xymatrix{3\ar@<-3pt>@/_1.6pc/[ddrr]|{-y}\ar@<3pt>@/^1pc/[rrrr]|{\,-y}&&
12\ar[ll]|{\,-x}\ar[rr]|x\ar[dl]|x\ar@<4pt>[dr]|x\ar@/^/[dd]|x&&
2\ar@<-6pt>@/_2pc/[llll]|x\ar@<3pt>@/^/[dd]|y\\
&2\ar[ul]|{-x}\ar@<4pt>[dr]|y
\ar@/^1pc/@{.>}[rr]|x&&
3\ar@<4pt>[ul]|{-y}\ar[ur]|{-y}\ar[dl]|{-y}
\ar@/^1pc/@{.>}[ll]|{-y}&\\
&&1\ar@<6pt>@/^2.2pc/[uull]|x\ar@<3pt>[ul]|{-x}&&
13\ar[ul]|{-x}\ar@<4pt>[uu]|{-x}\ar[ll]|{-y}\\
}
\end{equation*}

Together with the matrices given above for $\phi_{8,5}$, this completes the
description of the representations of the Hecke algebra of $G_{24}$.

\subsubsection{Pre-$W$-graphs for $G_{27}$}

\begin{prop}
 For the presentations $P_1$ to $P_5$ of $G_{27}$, each representation admits
 at most one pre-$W$-graph whose restrictions to parabolic subgroups of type
 $A_2$, $B_2$ and $I_2(5)$ are $W$-graphs. For $P_1$ 26 out of 34
 representations admit such a graph, while for $P_2$ (resp. $P_3,P_4,P_5$) just
 16, (resp. 20, 14, 16) admit such a graph. Moreover, any representation which
 admits such a graph for any of $P_2$--$P_5$ admits one for $P_1$.
 For each pre-$W$-graph for $P_1$, there exists a corresponding $W$-graph.
\end{prop}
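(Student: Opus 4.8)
The proof is computational and rests on the inductive enumeration of admissible pre-$W$-graphs described in Section~\ref{sec:Wgraphs}, together with the $W$-graphs already established for the rank-$2$ groups $A_2$, $B_2$ and $I_2(5)$. The plan is to treat the assertions in turn: the enumeration and uniqueness of the qualifying pre-$W$-graphs, the counts per presentation, the containment of the $P_2$--$P_5$ families inside the $P_1$ family, and finally the realization of each $P_1$-pre-$W$-graph by a genuine $W$-graph.

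First, for a fixed presentation among $P_1,\ldots,P_5$ of Section~\ref{subsec:G27}, the three standard maximal parabolics $\langle\bs,\bt\rangle$, $\langle\bt,\bu\rangle$, $\langle\bs,\bu\rangle$ each have type $A_2$, $B_2$ or $I_2(5)$, and for each such type every irreducible representation has a \emph{unique} irreducible $W$-graph (the $2$-dimensional representations admitting in addition a second pre-$W$-graph that only yields a non-irreducible representation). I would run the enumeration program of Section~\ref{sec:Wgraphs} for each of the $34$ irreducible representations $R$ of $G_{27}$, and retain only those admissible pre-$W$-graphs whose restriction to each rank-$2$ parabolic is an actual $W$-graph; this produces the list of qualifying pre-$W$-graphs for the chosen presentation. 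The numbers $26,16,20,14,16$ and the containment statement are then read off by a direct comparison of these five finite lists.

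The uniqueness ("at most one") follows from a rigidity argument which the program makes effective. Writing $I=\{1,2,3\}$, the nodes $\gamma_1,\ldots,\gamma_r$ of a qualifying pre-$W$-graph are maps $I\to\{0,1\}$ (all reflections of $G_{27}$ have order $2$), and each value $\gamma_k(j)$ occurs in the restrictions to both maximal parabolics containing the $j$-th generator. The decomposition of $R|_{W_J}$ into irreducibles is fixed by $R$, and for each constituent the unique irreducible rank-$2$ $W$-graph fixes the joint pattern of the pairs $(\gamma_k(j),\gamma_k(j'))$; consistency of the shared coordinate across the two parabolics containing a given generator then determines the whole tuple, up to the ordering of the nodes. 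Hence at most one qualifying pre-$W$-graph can exist for each $R$.

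It remains to realize each of the $26$ qualifying pre-$W$-graphs for $P_1$ by an actual $W$-graph, that is, to specialize the undetermined off-diagonal entries of $T_\bs,T_\bt,T_\bu$ so that the defining relations of $\cH(G_{27},\{x,y\})$ hold and the result deforms a conjugate of $R$. This is the main obstacle, since for the larger representations the relations of $P_1$ yield a nonlinear system of high degree in many unknowns. I would attack it exactly as in Section~\ref{sec:Wgraphs}: first conjugate $R$ into $W$-graph shape with as many extra zeros as possible (a linear problem), then solve for the remaining entries, using for the highest-dimensional cases the Ansatz that the block-diagonal parts along a maximal parabolic coincide with the already-known $W$-graphs of $\cH(W_J)$. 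Once a candidate is found, verifying that it is a representation and that it specializes at the group-algebra point to $R$ is immediate. The $8=34-26$ representations of $G_{27}$ admitting no qualifying $P_1$-pre-$W$-graph, among them $\phi_{8,6}$ displayed above, fall outside this proposition and are instead constructed by the Hensel lifting and Pad\'e methods of Section~\ref{sec:hensel}.
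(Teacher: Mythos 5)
Your overall strategy coincides with the paper's: the proposition is established purely computationally, by running the inductive enumeration of admissible pre-$W$-graphs for each of the five presentations of Section~\ref{subsec:G27}, keeping those whose restrictions to the rank-2 parabolics are actual $W$-graphs, and then producing an explicit $W$-graph for each surviving $P_1$-pre-$W$-graph by the techniques of Section~\ref{sec:Wgraphs} (conjugation of a known model into sparse $W$-graph shape, the parabolic block-diagonal Ansatz, etc.). The paper's ``proof'' is precisely the exhibition of Table~\ref{tab:G27} and of the explicit graphs for $\phi_{3,1}$, $\phi_{5,6}'$, $\phi_{6,2}$, $\phi_{9,6}$, $\phi_{10,3}$, together with the observation that composing the antiautomorphism of $B(W)$ interchanging $\bt$ and $\bu$ with transposition carries $\phi_{5,6}'$ to $\phi_{5,6}''$ --- a labour-saving trick you do not mention but which is only a convenience.

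However, your ``rigidity argument'' for the clause ``at most one'' is not a proof and would fail as stated. Knowing, for each two-element subset $J\subset I$, the multiset of pairs $\bigl(\gamma_k(j),\gamma_k(j')\bigr)_k$ --- which is all that the decomposition of $R|_{W_J}$ and the uniqueness of the rank-2 $W$-graphs give you --- does not determine the multiset of triples $\bigl(\gamma_k(1),\gamma_k(2),\gamma_k(3)\bigr)_k$: for instance the two multisets $\{(0,0,0),(0,1,1),(1,0,1),(1,1,0)\}$ and $\{(1,1,1),(1,0,0),(0,1,0),(0,0,1)\}$ have identical pairwise projections onto all three pairs of coordinates. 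So ``consistency of the shared coordinate'' does not pin down the gluing of the three restrictions into a single pre-$W$-graph; there is a genuine combinatorial ambiguity, and whether it materializes for a given representation of $G_{27}$ can only be decided by exhaustive search. This does not sink your proof, because your procedure does run the exhaustive enumeration, and uniqueness --- like the counts $26,16,20,14,16$ and the containment of the $P_2$--$P_5$ families in the $P_1$ family --- is then read off from the resulting finite lists, exactly as in the paper. But the uniqueness clause must be presented as a computational finding of that enumeration, not as a consequence of an a priori gluing argument; as written, the sentence ``Hence at most one qualifying pre-$W$-graph can exist'' claims more than the argument delivers.
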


 Table~\ref{tab:G27} contains the pre-$W$-graphs for $P_1$ for
 representatives of the Galois orbits.
The $8$ and $15$-dimensional representations do not admit any pre-$W$-graph.

\begin{table}[htbp]
\caption{$W$-graphs for $G_{27}$}  \label{tab:G27}
$\begin{array}{|c|c|}
\hline
 \mbox{Character}&\mbox{$W$-graph}\\
\hline
\phi_{1,0}&(132)\\
\phi_{3,1}&(12,13,23)\\
\phi_{5,6}''&(12,13,13,2,23)\\
\phi_{5,6}'&(12,12,13,23,3.12)\\
\phi_{6,2}&(13,13,12,12,23,23)\\
\phi_{9,6}&(1,12,12,13,13,2,23,23,3)\\
\phi_{10,3}&(12,12,12,13,13,13,2,23,23,3)\\
\hline
\end{array}$
\end{table}
  
Here is a $W$-graph for $\phi_{3,1}$, where $c=1+\zeta_3^2(1-\sqrt 5)/2$:
\begin{equation*}
\xymatrixcolsep{4pc}
\xymatrixrowsep{4pc}
\xymatrix{
12\ar@<3pt>[rr]^{-c}\ar@<3pt>[dr]^{-2}&&23
\ar@<3pt>[ll]^{xy/c}\ar@<3pt>[dl]^{-y}\\
&\ar@<3pt>[ul]^{xy}\ar@<3pt>[ur]^x 13&\\
}
\end{equation*}

Here is a $W$-graph for $\phi_{5,6}'$,
\begin{equation*}
\xymatrixcolsep{4pc}
\xymatrixrowsep{4pc}
\xymatrix{
12\ar[rr]^y\ar@<3pt>[dr]^y\ar@<3pt>[dd]^{-y}&&13\ar@<3pt>[dl]^y\ar[dd]^y\\
&23\ar@<3pt>[ul]^{-x}\ar@<3pt>[ur]^{-x}\\
13\ar@<3pt>[uu]^x\ar[ur]^y\ar@<3pt>[rr]^x&&2\ar@<3pt>[ll]^{-y}\\
}
\end{equation*}
and here is one for $\phi_{6,2}$:
\begin{equation*}
\xymatrixcolsep{4pc}
\xymatrixrowsep{4pc}
\xymatrix{
&2\ar[r]^{-\zeta_3^2y}\ar@<3pt>[dl]^{-x}\ar@<3pt>[dd]^{-\zeta_3^2x}&
3\ar@<3pt>[dr]^{-\zeta_3^2x}\ar@<3pt>[dd]^{-x}&\\
1\ar@<3pt>[ur]^y\ar@<3pt>[dr]^y&&&
1\ar@<3pt>[ul]^{\zeta_3y}\ar@<3pt>[dl]^x\\
&3\ar@<3pt>[ul]^{-x}&2\ar[l]^{2x}\ar@<3pt>[uu]^{2y}\ar@<3pt>[ur]^{-y}\\
}
\end{equation*}
Interchanging the generators $\bu$,$\bt$ in the presentation $P_1$
obviously defines an antiautomorphism of $B(W)$. It can be checked that
composition of this antiautomorphism with transposition interchanges the
representations $\phi_{5,6}'$ and $\phi_{5,6}''$, so we need not give a
$W$-graph for the latter.

We now give a $W$-graph for $\phi_{9,6}$ as the union of the following pieces.
The nodes are $1$, $2$, $3$, $12$, $\fbox{12}$, $13$, $\fbox{13}$,
$23$, $\fbox{23}$ (the last three occur twice and we box one of the occurrences
to distinguish it from the other). Here we set $u=\sqrt[3]x$ and $v=\sqrt[3]y$.
$$
3\xleftarrow{u^3}\fbox{12}\xrightarrow{v^2-uv}\fbox{13}\xrightarrow{-u^3v}12
\xrightarrow{(u-v)(u^2+v^2)}1\xrightarrow{-u^3v^3}\fbox{23}\xrightarrow{1}2
\xleftarrow{(u-v)^2}23
$$

$$
\fbox{13}\xleftarrow{-v^2}2\xleftarrow{v^2(u-v)}
\fbox{12}\xrightarrow{-uv^2}13\xrightarrow{v^2(u-v)}1
\xleftarrow{1}\fbox{23}\xrightarrow{-1}3\xleftarrow{v(2u-v)}23
\xleftarrow{-u^2v^2}12
$$

$$
13\xrightarrow{-u^2v^2}23\xrightarrow{uv}12
\xrightarrow{v^2}\fbox{13}\xrightarrow{u^2v(v-u)}3\xrightarrow{v^2(u-v)}2
$$

$$
2\xleftarrow{u^3v}\fbox{13}\xleftarrow{u-v}23\xrightarrow{uv}13
\xrightarrow{u^2v}\fbox{12}\xrightarrow{u(uv-u^2-v^2)}1
$$

$$
2\xleftarrow{u^2(v-u)}
13\xrightarrow{u^2(u-v)}3\xrightarrow{-v^3}\fbox{12}
$$
Here is, following the same conventions, a $W$-graph for $\phi_{10,3}$.
The nodes are $13$, $12$, $\fbox{13}$, $\fbox{12}$, $\fbox{\fbox{13}}$,
$\fbox{\fbox{12}}$, $3$, $2$, $23$, $\fbox{23}$:

$$
13\xrightarrow{y} 3\xrightarrow{-1} \fbox{12}\xrightarrow{-y}
\fbox{\fbox{13}}\xrightarrow{xy+x^2+y^2} 12\xrightarrow{x} 
\fbox{23}\xrightarrow{-1} \fbox{13}\xrightarrow{x} 
\fbox{\fbox{12}}\xrightarrow{x-y} 2\xleftarrow{-x+2y} 23
$$

$$
13\xrightarrow{-1} 23\xrightarrow{-xy-y^2} 12\xleftarrow{2x^2+y^2}
\fbox{13}\xrightarrow{x} \fbox{12}\xrightarrow{xy} 3\xrightarrow{1}
2\xrightarrow{-y} \fbox{\fbox{13}}\xleftarrow{3y} 
\fbox{\fbox{12}}\xrightarrow{xy+x^2} \fbox{23}
$$

$$
2\xleftarrow{x} \fbox{\fbox{13}}\xrightarrow{x^2} 3\xleftarrow{2x^2}
\fbox{13}\xleftarrow{-2y} \fbox{\fbox{12}}\xrightarrow{-x} 23\xrightarrow{-y}
\fbox{12}\xrightarrow{xy+x^2+y^2} 13\xrightarrow{x} \fbox{23} 
\xrightarrow{-y} 12
$$

$$
\fbox{13}\xrightarrow{2x} 2\xrightarrow{-xy} 3\xleftarrow{xy}
\fbox{\fbox{12}}\xrightarrow{xy+x^2} 13\xleftarrow{xy} 23\xrightarrow{2x}
\fbox{\fbox{13}}\xrightarrow{x} \fbox{12}
$$

$$
\fbox{12}\xrightarrow{x+y} 2\xleftarrow{-1} \fbox{23}\xleftarrow{xy}
\fbox{13}\xleftarrow{-x} 23\xrightarrow{y} \fbox{\fbox{12}}
$$

$$
23\xrightarrow{y^2} 3\qquad
\fbox{23}\xrightarrow{2} \fbox{\fbox{13}}
$$

We have found a model for $\phi_{15,5}$ by Hensel lifting, but at the current
time we could not find a model with coefficients in $A$, only in $K$.

We now give some information on higher dimensional primitive groups:

\subsubsection{Pre-$W$-graphs for $G(4,4,3)$ and $G_{29}$}
For the presentation of $G(4,4,3)$ corresponding to $P_1$ the 6-dimensional
representation does not admit a pre-$W$-graph while for the one corresponding
to $P_2$ it is the 2-dimensional representation which does not admit one.

\begin{prop}
 For the presentation corresponding to $P_1$ of $G_{29}$, 15 representations 
 admit a pre-$W$-graph, while for that corresponding to $P_2$, 27 (out of 37)
 admit one.
 The two representations $\phi_{5,8}$ and $\phi_{5,16}$ admit a pre-$W$-graph
 for $P_1$ and not for $P_2$. \par
 All together all representations of $W$ admit a pre-$W$ graph except for two of
 the 4 of dimension 15, and for those of dimension 20.
 We have found actual $W$-graphs for all the pre-$W$-graphs excepted the last
 three of table~\ref{tab:G29}.
\end{prop}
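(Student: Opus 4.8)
The plan is to separate the proposition into its two genuinely different halves: the \emph{counting} of admissible pre-$W$-graphs, which is a finite combinatorial enumeration, and the \emph{construction} of actual $W$-graphs, which is the hard computational step. For the counts I would run the inductive enumeration described in Section~\ref{sec:Wgraphs}. Since $W=G_{29}$ is built from its parabolic $G(4,4,3)$ by adjoining the generator $\bs$ with the relations $\bs\bt\bs=\bt\bs\bt$, $\bs\bu=\bu\bs$, $\bs\bv=\bv\bs$ (Section~\ref{sec:Presentations}), and since admissible pre-$W$-graphs for every proper parabolic — the rank-$2$ groups and both presentations $P_1$, $P_2$ of $G(4,4,3)$ — are already in hand, the work reduces to a finite search. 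Here all reflections are conjugate of order~$2$, so $d=2$, $I=\{1,2,3,4\}$, and each node is a map $\gamma_k:I\to\{0,1\}$.

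Concretely, for each of the $37$ irreducible representations $R$ of $W$ and each presentation, I would list all node-sequences $(\gamma_1,\dots,\gamma_r)$ and retain exactly those whose restriction to every proper parabolic $W_J$ (for which, by induction, it suffices to treat the maximal ones) coincides, up to reordering, with the union of the already-chosen admissible pre-$W$-graphs of the irreducible constituents of $R|_{W_J}$. The constituent multiplicities are read off from the character table of $W$ by restricting characters to the parabolics. Tallying which $R$ survive then yields the numbers $15$ for $P_1$ and $27$ for $P_2$, exhibits $\phi_{5,8}$ and $\phi_{5,16}$ as the representations admissible for $P_1$ but not $P_2$, and — taking the union over the two presentations — shows that every representation admits a pre-$W$-graph except two of the four $15$-dimensional ones and those of dimension $20$. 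All of this is mechanical once the parabolic data are fixed.

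For the existence of genuine $W$-graphs I would, for each surviving pre-$W$-graph, try to specialize its free entries so that the resulting matrices satisfy the braid relations of $\cH(W,\bu)$ and specialize, up to conjugacy, to $R$ under $u_{j,m}\mapsto\exp(2\pi i m/d_j)$. In low dimensions this is a direct (if nonlinear) elimination solved by a Gr\"obner basis computation. In higher dimensions the raw system is intractable, and I would use the reduction devices of Section~\ref{sec:Wgraphs}: first conjugate $R$ itself over $W$ into $W$-graph shape with as many zeros as possible (a purely \emph{linear} problem) and then seek only Hecke specializations with that zero pattern; alternatively, start from a representation of $\cH$ produced by the Hensel lifting and Pad\'e approximation of Section~\ref{sec:hensel} and conjugate it into shape; and for the largest representations impose the block-diagonal Ansatz that the $\{\bt,\bu,\bv\}$-block of each generator reproduces the known $W$-graphs of the constituents of $R|_{G(4,4,3)}$, which sharply cuts the number of unknowns. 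Any candidate specialization is then verified a posteriori by simply evaluating the defining relations of $\cH(W,\bu)$.

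The main obstacle is precisely this last construction step for the large-dimensional representations. The block-diagonal Ansatz is not forced by the axioms and need not hold, so it may eliminate every candidate even when a $W$-graph exists, while without it the nonlinear systems are too large for the Gr\"obner approach to terminate in practice. This is exactly where the proposition stops short: for the three representations recorded as the last entries of Table~\ref{tab:G29} I expect to be able to pin down admissible pre-$W$-graphs but not a specialization realizing an actual $W$-graph, which is why the final clause asserts that we have found $W$-graphs for all pre-$W$-graphs \emph{except} those three.
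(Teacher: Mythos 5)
Your proposal matches the paper's approach: the counting claims are established exactly as you describe, by the inductive enumeration of admissible pre-$W$-graphs over the parabolic tower (the rank-2 parabolics and the two presentations of $G(4,4,3)$, each lifted to $G_{29}$ by adjoining $\bs$), and the existence of actual $W$-graphs is settled by the same specialization techniques of Sections~\ref{sec:hensel} and~\ref{sec:Wgraphs} (sparse conjugation, Hensel lifting with Pad\'e approximation, and the parabolic block-diagonal Ansatz), with the last three entries of Table~\ref{tab:G29} left open just as the proposition states. The paper offers no argument beyond reporting these computations, so your reconstruction is essentially its proof.
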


Table~\ref{tab:G29} contains pre-$W$-graphs for representatives of Galois
orbits of representations of the Hecke algebra. The first 7 graphs in the
table correspond to $P_1$ and the rest to $P_2$. To condense the table,
repeated nodes are represented once, the multiplicity being given by an
exponent.

\begin{table}[htbp]
\caption{pre-$W$-graphs for $G_{29}$}  \label{tab:G29}
$\begin{array}{|c|c|}
\hline
 \mbox{Character}&\mbox{$W$-graph}\\
\hline
\phi_{1,0}&(1234)\\
\phi_{4,4}&(123,124,134,234)\\
\phi_{4,1}&(123,124,134,234)\\
\phi_{5,8}&(123,134,14,23,24)\\
\phi_{6,12}&(12,13,14,23,24,34)\\
\phi_{6,10}'''&(12,13,14,23,24,34)\\
\phi_{10,2}&(123^2,124^2,134^3,23,234,24)\\
\hline
\phi_{6,10}'&(13,134,142,32,4,2)\\
\phi_{10,6}&(13,134,132,14,142,12,34,342,32,42)\\
\phi_{15,4}'&(13,134^3,132^2,14,142^2,34,32^2,42^2,2)\\
\phi_{16,3}&(13,134^3,132^2,14,142^2,12,34,342, 32^2,42^2)\\
\phi_{24,6}&(13^3,134^2,132,14^3,142,12^2,3,34^2,32^3,4,42^3,2^2)\\
\phi_{24,7}&(13^3,134^2,132,14^3,142,12^2,3,34^2,32^3,4,42^3,2^2)\\
\phi_{30,8}&(13^4,134^2,132,14^4,142,12^3,3,34^3,32^4,4,42^4,2^2)\\
\hline
\end{array}$
\end{table}

\subsubsection{Pre-$W$-graphs for $G_{32}$}
For $G_{32}$, 57 of the 102 irreducible representations admit a pre-$W$-graph.  
If one includes the Galois-conjugates of these representations, one gets all
representations but 12: the missing ones are 3 of the 60-dimensional ones,
the 64-dimensional and the 81-dimensional ones.

\subsubsection{Pre-$W$-graphs for $G_{33}$}
For the presentation $P_1$ of \ref{subsec:G33} we find that 14 representations
admit  a pre-$W$-graph, and for presentation $P_2$  we find that 14 more admit
one,  for  a  total  of  28 out of 40.  The set of representations which admit a
pre-$W$-graph is stable under Galois action, so we do not get new ones.
For the other presentations $P_3$ to $P_5$ the representations which admit a
pre-$W$-graph are a subset of the 14 which have one for $P_1$.

\subsubsection{Pre-$W$-graphs for $G_{34}$}
For the presentation corresponding to $P_1$ of \ref{subsec:G33} we find that
18 representations admit  a pre-$W$-graph, and for the presentation
corresponding to $P_2$  we find that 13 more admit one,  for  a  total  of  
31 out of 169. The set of representations which admit a
pre-$W$-graph is stable under Galois action, so this does not give new ones.
For the presentations corresponding to $P_3, P_4, P_5$ the representations 
which admit a pre-$W$-graph are a subset of the 18 which have one for $P_1$.

%%%%%%%%%%%%%%%%%%%%%%%%%%%%%%%%%%%%%%%%%%%%%%%%%%%%%%%%%%%%%%%%%%%%%%%%%
\section{Checking the conjectures of section \S2}

We now use the representations obtained above for 2 and 3-dimensional
exceptional groups in order to verify some
of the conjectures on the structure of cyclotomic Hecke algebras stated in
Section~\ref{sec:conj} for some of the primitive complex reflection groups.

\subsection{Computational difficulties}

The main problem to  carry out the computations implied by e.g.
Proposition~\ref{good} is to compute the form $t$ on a large set of images
of elements of $B(W)$ (a set of cardinality $r|W|^2$ where $r$ is the rank of
$W$).

To  compute $t$, we use formula (\ref{schur}), where the Schur elements are
taken  from \cite{MaG}  and $\chi(x)$  is computed  using the matrices for the
representation of character $\chi$ that we computed in the previous sections.

To minimize the computations, we use a few tricks:
\begin{itemize}
\item  We compute the orbits  of the set of  words we consider under the braid
 relations  and rotations (which give a  conjugate element in the braid group).
 It is sufficient to compute the trace on one element of each orbit.
\item  When all generators of $W$  are of order 2, if  in one of the orbits we
 have   a   word   of   length   $k$   where  there  is  a  repetition  $\ldots
 \bs\bs\ldots$,  using the quadratic defining relation of the Hecke algebra
 $(T_{\bs}-u_{s,0})(T_{\bs}-u_{s,1})=0$  we can  reduce the computation
 to that for one word of length $k-1$ and one word of length $k-2$.
\end{itemize}

For instance, for $G_{24}$ to compute the matrix $\{t(T_{\bw\bw'})\}$
we have to compute the trace on $336^2=112896$ elements; they fall into
$14334$ orbits under rotations and braid relations, and after taking into account 
quadratic relations we still have to handle $327$ elements. For computing
the matrix products corresponding to these words, we look for the occurrence of
common subwords so as to never compute twice the same product, which means that
for each representation we have about $600$ matrix products to effect.
We also take into account the Galois action on representations so we need to compute
the character value only for one representation in each Galois orbit; for $G_{24}$
there are 5 such orbits.

Even  with these simplifications, the matrix  products for algebras which have
many  parameters get very costly, as well  as the final step of evaluating the
right-hand side of~(\ref{schur}), since the gcd of the Schur elements is a large
polynomial.  In quite  a few  cases we  could only  make a heuristic check, by
computing  in the algebra where the parameters are specialized to prime powers
(to  primes taken to the $e$-th power,  so the algebra splits over $\QQ$). The
heuristic  check for belonging to  $A$ become to belong  to $\ZZ$ localized at
the  chosen primes, and to be a unit in $A$ becomes being an integer with only
prime factors the chosen primes.

\subsection{Finding a section such that $t(T_\bw)=0$}
In  order to find a  section $\bW\subset B(W)$ such  that $t(T_\bw)=0$ for any
$\bw\ne  1$, our first idea was, mimicking  the case of finite Coxeter groups,
to  lift elements  of $W$  by lifting  minimal-length expressions  for them 
as positive words in the generators  $s_1,\ldots,s_n$. However, though  this 
{\em almost}
works, it does not always work; we manage with a slight variation on this, as
we shall explain.

Obtaining  all minimal  length expressions  for elements  of $W$ is quite easy
using  standard methods for  enumerating elements of  a group, and is feasible
for   all  exceptional  complex  reflection  groups  but  $G_{34}$.  

The  Tables~\ref{tab:dim2}  and~\ref{tab:dim3}  below  collect  in  the column
``$t(T_\bw)\ne  0$'' the results we  got by computing the  trace on minimal length
elements.  The number in  the column is  the number of  elements $w\in W$ such
that  some minimal word $\bw$  for $w$ has $t(T_\bw)\ne  0$. If this number is
not  $0$, the second number  separated by a {\tt  /} is the number of elements
$w\in W$ such that no minimal word $\bw$ for $w$ has $t(T_\bw)=0$.

When some minimal word for any $w\in W$ has zero trace, we build a section
by choosing arbitrarily such a word for each element.  We now describe 
how to build a section in the other cases:

For $G_{11}$, with the notations of Section~\ref{sec:2-dim},
the lift all minimal lengths expressions of the two elements
$s_3^2(s_2s_1s_3)^2,(s_2s_1s_3)^2s_3s_2$ have non-zero trace; but the
longer lifts $\bs_1\bs_2\bs_1\bs_3\bs_2\bs_1\bs_3^2\bs_1\bs_3$, 
$\bs_1\bs_3\bs_2\bs_1\bs_3^2(\bs_2\bs_1)^2$ have a zero trace.
By making these picks, we can find a section which satisfies
$t(T_{\bw})=t(T_{\bw^{-1}\bpi})=0$.

For $G_{15}$, all minimal length expressions of the two elements
$s_2(s_1s_3)^2s_2,s_2s_1(s_3s_2)^2$ have $t(T_\bw)\ne0$; but the longer lifts
$(\bs_2\bs_2\bs_3)^2 \bs_2, \bs_2(\bs_1\bs_3)^2\bs_1\bs_2^2$ have a zero 
trace. All minimal expressions of $\bs_1\bs_2(\bs_3\bs_2)^2,
\bs_1\bs_2(\bs_2\bs_3)^2, (\bs_3\bs_2)^2\bs_1\bs_2$ have
$t(T_{\bw^{-1}\bpi})\ne 0$. But the longer lifts $\bs_1(\bs_3\bs_2)^3\bs_ 3,
(\bs_3\bs_2)^3 \bs_3\bs_1,
\bs_3(\bs_2\bs_1)^3\bs_3$ work. By making these picks, we can find a section which satisfies
$t(T_{\bw})=t(T_{\bw^{-1}\bpi})=0$ and $t(T_{\bw\bw'})\in A$, but unfortunately
$\det\{t(T_{\bw\bw'})\}_{w,w'}$ is not invertible in $A$ for this choice.

However, there is another way to build a section which leads to a good
section. Since $G_{11}$ and $G_{15}$ have the same hyperplane arrangements,
the braid group $B(G_{15})$ is a subgroup of index 2 of $B(G_{11})$,
generated by $\bs_1,\bs_2,\bs_3^2$. The elements in the above section for 
$G_{11}$ where $\bs_3$ occurs an even number of times form a section
for $G_{15}$ which turns out to be good.

For $G_{24}$ and $G_{27}$ we only consider the presentation $P_1$ as it is the
best  behaved. For  $G_{27}$, all  minimal lengths  expressions of the element
$(sut)^5$  have  non-zero  trace.  The  center  of  $B(W)$ is generated by the
element  $\bz=(\bs\bt\bu)^5$. The ``bad'' element $(\bs\bu\bt)^5$ is a lift of
$z^{-1}$.  The  lift  $\bz^5$  of  $z^{-1}$,  which  is much longer, satisfies
$t(T_{\bz^5})=0$.

\subsection{Checking that the section is good}
To  check
(\ref{tracecond1}) we avoid having to give an expression for $\bw^{-1}\bpi$ in
terms of the generators by using that
$\chi(T_{\bw^{-1}\bpi})=\chi((T_\bw)^{-1})\omega_\chi(T_\bpi)$, where
$\omega_\chi(T_\bpi)$ is easy to compute using e.g. the formula
\cite[1.22]{BMM}.

In the column ``good'' in Tables~\ref{tab:dim2} and~\ref{tab:dim3} below we
have recorded with a '{\tt +}' if we could check that the section build in the
previous subsection is good, and satisfies the assumptions of Proposition
\ref{good}, thus providing an $A$-basis of $\cH(W,\bu)$.

\subsection{Tables}
In Tables~\ref{tab:dim2} and~\ref{tab:dim3} below we collect the computational
results that we got so far, together with results from an unpublished note of
J\"urgen M\"uller \cite{Mue}. Here, we write 'specialized' in the column
'algebra' when we had to do the computation with parameters specialized to
prime powers.
\par
M\"uller used Linton's vector enumerator to construct the regular representation
of some cyclotomic Hecke algebras $\cH(W)$. From that, he is able to verify
Conjecture~\ref{conj}(a) in several cases by exhibiting an $A$-basis, marked
by a '+' in the column 'rank' of our tables. Furthermore, he can construct a
symmetrizing form over $A$ satisfying Conjecture~\ref{conj}(b) in the cases
marked '+' in the column 'form'.

\begin{table}[htbp]
\caption{Hecke algebras for 2-dimensional primitive groups}  \label{tab:dim2}
$\begin{array}{|c|r|r|c|c|c|c||c|c|}
\hline
 W& |W|& |\bu|&\mbox{algebra}&t(T_\bw)\ne 0&t(T_{\bw^{-1}\bpi})\ne 0&
  \mbox{good}& \mbox{rank \cite{Mue}}& \mbox{form \cite{Mue}}\\
\hline
 G_{4}&  24& 3& +&0&0&+& +& +\\
 G_{5}&  72& 6&\mbox{spec.}&0&0&+& +& +\\
 G_{6}&  48& 5&\mbox{spec.}&0&0&+& +& +\\
 G_{7}& 144& 8&\mbox{spec.}&3/0&1/0&+& +& +\\
 G_{8}&  96& 4&\mbox{spec.}&0&0&+& +& +\\
 G_{9}& 192& 6&\mbox{spec.}&0&0&+& +& +\\
G_{10}& 288& 7&\mbox{spec.}&2/0&2/0&+& +& +\\
G_{11}& 576& 9&\mbox{spec.}&22/2&12/0&+& +& ?\\
G_{12}&  48& 2& +&0&0&+& +& +\\
G_{13}&  96& 4&\mbox{spec.}&1/0&0&+& +& +\\
G_{14}& 144& 5&\mbox{spec.}&0&0&+& +& +\\
G_{15}& 288& 7&\mbox{spec.}&11/2&11/3&+& +& ?\\
G_{16}& 600& 5&\mbox{spec.}&11/0&11/0&?& +& ?\\
G_{20}& 360& 3&\mbox{spec.}&2/0&2/0&+& +& ?\\
G_{21}& 720& 5&\mbox{spec.}&6/0&6/0&?& +& ?\\
G_{22}& 240& 2&+&1/0&4/0&+& +& ?\\
\hline
\end{array}$
\end{table}

Neither M\"uller nor we have been able to check any of the cases
$G_{17},G_{18},G_{19}$, for which the number of parameters is at least~7 and the
order of $W$ at least~1200.

\begin{table}[ht]
\caption{Hecke algebras for 3-dimensional primitive groups}  \label{tab:dim3}
$\begin{array}{|l|r|r|c|c|c|c||c|}
\hline
 W& |W|& |\bu|&\mbox{algebra}&t(T_\bw)\ne 0&t(T_{\bw^{-1}\bpi})\ne 0&
  \mbox{good}&\mbox{rank \cite{Mue}}\\
\hline
G_{24},P_1&  336& 2& +&0&0&+& +\\
G_{24},P_2&     & 2& +&3/0&4/0&+& +\\
G_{24},P_3&     & 2& +&0&0&+& +\\
G_{25}&      648& 3&\mbox{spec.}&0&0&+& +\\
G_{26}&     1296& 5&\mbox{spec.}&0&0&?& +\\
G_{27},P_1& 2160& 2& +&1/1&30/6&?& \\
G_{27},P_2&     & 2& +&41/1&97/28&?& +\\
G_{27},P_3&     & 2& +&31/9&44/24&?& +\\
G_{27},P_4&     & 2& +&19/2&42/1&?& \\
\hline
\end{array}$
\end{table}

In his computations, M\"uller only looked at the presentations $P_2,P_3$ for
the group $G_{27}$.

%%%%%%%%%%%%%%%%%%%%%%%%%%%%%%%%%%%%%%%%%%%%%%%%%%%%%%%%%%%%%%%%%%%%%%%%%

\end{document}